\newcommand{\rset}{\mathbb{R}}
\newcommand{\Gb}{\mathbf{G}}
\newcommand{\Eb}{\mathbf{E}}
\newcommand{\gb}{\mathbf{g}}
\newcommand{\Wb}{\mathbf{W}}
\newcommand{\wb}{\mathbf{w}}
\newcommand{\vb}{\mathbf{v}}
\newcommand{\bo}{\mathbf}
\newcommand{\bu}{\mathbf{u}}
\newcommand{\bv}{\mathbf{v}}
\newcommand{\eb}{\mathbf{e}}
\newcommand{\eo}{\epsilon_{\text{out}}}
\newcommand{\ei}{\epsilon_{\text{in}}}
\newcommand{\ec}{\epsilon_{\text{c}}}
\newcommand{\lu}{\mathcal{L}}
\newcommand{\qed}{\hfill \qedbox \\[1ex]}
\newtheorem{assumption}[theorem]{Assumption}
\newenvironment{remark}{
       \refstepcounter{theorem}\begin{trivlist}\item[]{\bfseries
      Remark \thetheorem\,}}
       {\end{trivlist}}
\newenvironment{algorithm}[1]
{\vskip0.1cm\noindent\textsc{Algorithm } $($#1$)$.}{}
\newcommand{\iprod}[1]{\left\langle#1\right\rangle}
\newcommand{\set}[1]{\left\{#1\right\}}
\newcommand{\qedbox}{
\begin{minipage}[b]{3mm}
{\unitlength1pt
\begin{picture}(7,7)
\thicklines  \put(-0.2,0){\line(1,0){6.7}}
\put(6,0){\line(0,1){6.2}} \thinlines \put(0,0){\line(0,1){6}}
\put(0,6){\line(1,0){6}}
\end{picture}}
\end{minipage}}
\begin{document}


\title{Rate analysis of inexact dual first order methods: Application to distributed MPC for
 network systems}

\author{Ion Necoara and Valentin Nedelcu \thanks{The authors are with Automation and
 Systems Engineering Department,  University
Politehnica Bucharest, 060042 Bucharest, Romania. Corresponding
author: I.~Necoara, Tel. +40-21-4029195, Fax +40-21-4029195, Email
ion.necoara@acse.pub.ro.} }


\maketitle

\begin{abstract}
In this paper we propose and analyze two dual  methods based on
inexact  gradient information and averaging that generate
approximate primal solutions for smooth convex optimization
problems. The complicating constraints are moved into the cost
using the Lagrange multipliers. The dual problem is solved by
inexact first order methods based on approximate gradients and we
prove sublinear rate of convergence for these methods. In
particular, we provide, for the first time,  estimates on the
primal feasibility violation and primal and dual suboptimality of
the generated approximate primal and dual solutions. Moreover, we
solve approximately the inner problems with a parallel coordinate
descent algorithm  and we show that it has linear convergence
rate. In our analysis we rely on the Lipschitz property of the
dual function and inexact dual gradients. Further, we apply these
methods to distributed model predictive control for network
systems. By tightening the complicating constraints we are also
able to ensure the primal feasibility of the approximate solutions
generated by the proposed algorithms. We obtain a distributed
control strategy that has the following features: state and input
constraints are satisfied, stability of the plant is guaranteed,
whilst the number of iterations  for the suboptimal solution can
be precisely determined.
\end{abstract}

\begin{keywords}
\noindent Inexact dual gradient algorithms, parallel coordinate
descent algorithm, rate of convergence, dual decomposition,
estimates on suboptimality and infeasibility, distributed model
predictive control.
\end{keywords}

\pagestyle{myheadings} \thispagestyle{plain} \markboth{I. Necoara,
V. Nedelcu}{Rate analysis of inexact dual first order methods:
Application to distributed MPC}


\section{Introduction}
\label{sec_introduction} Different problems from control and
estimation can be addressed within the framework of network
systems \cite{NecNed:11}. In particular, model predictive control
(MPC) has become a popular advanced control technology implemented
in network systems due to its ability to handle hard input and
state constraints.  Network systems are  complex and large in
dimension, whose structure may be hierarchical, multistage or
dynamical and they have multiple decision-makers. Such systems can
be broken down into smaller, more malleable subsystems called
decompositions. How to consider the relationships between these
various decompositions has led to much of the recent work within
the general subject of the study of network systems.

Decomposition methods represent a  powerful tool for solving
distributed control, estimation and other engineering problems. The
basic idea of these methods is to decompose the original large
optimization problem into smaller subproblems which are then
coordinated by a master problem. Decomposition  methods can be
divided into two main classes: primal   and dual decomposition
 methods. In primal
decomposition  the optimization problem is solved using the original
formulation and variables, while the complicating constraints are
handled via methods  such as interior point, penalty functions,
feasible directions, Jacobi
\cite{CamSch:11,DunMur:06,FarSca:12,NecNed:11,SteVen:10}. In dual
decomposition the original problem is rewritten using Lagrangian
relaxation and then solve the dual problem
\cite{AlvLim:11,BerTsi:89,BoyPar:11,DoaKev:11,NecSuy:08}. When the
original problem is characterized by both simple and complicating
constraints, dual decomposition may represent an appropriate choice
since the complicating constraints can be moved into  the cost using
Lagrange multipliers and then  the inner problems, that have simple
constraints, are solved  and  the dual variables are updated  with a
Newton or (sub)gradient algorithm. Dual fast gradient methods based
on exact first order information with provable guarantees on
suboptimality are given in \cite{NecSuy:08} for general convex
problems and  \cite{PatBem:12} for QP's.  Dual methods based on
subgradient iteration and averaging, that produce primal solutions
in the limit, can be found e.g. in
\cite{KiwLar:07,LarPat:98,SenShe:86}. Converge rate analysis for the
dual subgradient method  has been studied e.g. in \cite{NedOzd:09},
where the authors provide estimates of order $\mathcal{O}
(1/\sqrt{k})$ for suboptimality and  feasibility violation of the
approximate solutions. Thus, an important drawback of the dual
methods is that feasibility of the primal variables can be ensured
only at optimality, which is usually impossible to attain in
practice. However, in many applications, e.g. from control and
estimation, the constraints can represent different requirements on
physical limitation of actuators, safety limits and operating
conditions of the controlled plant. Neglecting these constraints can
reduce economic profit and cause damage to the environment or
equipments. Therefore, any control or estimation scheme must ensure
feasibility. Further, there is no convergence rate analysis in any
of the existing literature  for inexact dual (fast) gradient
schemes. Thus, our goal is to develop  inexact dual gradient
algorithms which provide approximate primal solutions that are
suboptimal  and close to feasibility.

There are many ways to ensure feasibility of the primal variables in
distributed MPC, e.g. through  constraint tightening
\cite{CamSch:11,DoaKev:11,KuwRic:07,RicHow:07} or  distributed
implementations of some classical methods such as the method  of
feasible directions, penalty functions, Jacobi and others
\cite{CamOli:09, DunMur:06,FarSca:12,MaeMun:11,SteVen:10}. In
\cite{DoaKev:11},  a dual distributed algorithm for solving the MPC
problem for systems with coupled dynamics and constraints is
presented. The algorithm generates a primal feasible solution using
primal averaging and constraint tightening. The Jacobi algorithm
from \cite{BerTsi:89} is used to update the primal variables, while
the dual variables are updated using the subgradient method in
 \cite{NedOzd:09}. The authors prove the convergence of the algorithm
using the analysis of the dual subgradient method from
\cite{NedOzd:09} which has very slow convergence rate. In
\cite{KuwRic:07},  the authors propose a decentralized MPC algorithm
that uses the constraint tightening technique to achieve robustness
while guaranteeing robust feasibility of the entire system. In
\cite{RicHow:07,FarSca:12},  distributed MPC algorithms for systems
with coupled constraints is discussed. The approach divides the
single large planning optimization into smaller subproblems, each
planning only for the controls of a particular subsystem. Relevant
plan data is communicated between subproblems to ensure that all
decisions satisfy the coupled constraints. In
\cite{NecCli:12,SteVen:10} cooperative based distributed MPC
algorithms are proposed that converge to the centralized solution.
In \cite{MaeMun:11} a distributed MPC algorithm is proposed based on
agent  negotiation.   In \cite{CamSch:11,CamOli:09} distributed
algorithms based on interior point or feasible directions are
proposed that also converge to the centralized solution and
guarantees primal feasibility.  An iterative distributed model
predictive control  of large-scale nonlinear systems subject to
asynchronous and delayed state feedback is discussed in
\cite{LiuChe:12}.  See also \cite{ChrSca:12,NecNed:11,Sca:09} for
recent surveys of distributed and hierarchical MPC methods. While
most of the work cited above focuses on a primal approach, our work
develops for the first time efficient dual methods that ensure
constraint feasibility, tackles more general problems and more
complex constraints and provides much better estimates on
suboptimality.

{\em Contribution.} The contributions of the paper are as follows:
\begin{enumerate}
\item We propose and analyze novel dual algorithms with low complexity and
fast rate of convergence that generate approximate primal solutions
 for large smooth convex  problems.

\item We  introduce a general framework for inexact first order information and then
propose two inexact gradient methods for solving the dual (outer)
problem:
\begin{itemize}
\item an inexact dual gradient method, with rate of convergence of
order $\mathcal{O} ( 1 / k)$.

\item an inexact dual fast gradient method, with convergence rate
 of order $\mathcal{O}( 1 / k^2)$.
\end{itemize}

\item For both methods we provide for the first time a complete
rate analysis and estimates on primal/dual suboptimality and
feasibility violation of the generated approximate solutions.

\item In our schemes we solve the inner problems only up to a
certain accuracy $\ei$ by means of a parallel coordinate descent
method for which we prove linear rate of convergence.

\item For convex optimization models arising from distributed
MPC problems, we adapt our algorithms using a tightening constraints
approach, such that  the convergence rates of the methods are
preserved but in addition we are also able to ensure the primal
feasibility.

\item To certify the complexity of the proposed methods, we apply
the new algorithms on several linear distributed MPC problems with
state and input constraints.
\end{enumerate}

{\em Paper outline.} The paper is organized as follows. In Section
\ref{sec_mainsec} we introduce the dual problem of our original
optimization problem formulated in Section
\ref{sec_formulation_MPC}. In Sections \ref{sec_dg} and
\ref{sec_dfg} we develop inexact dual gradient and  fast gradient
schemes for solving the outer problem and analyze their convergence
rates. In Section \ref{sec_coordinate} we propose a parallel
coordinate descent method for solving the inner problems and prove
its convergence rate. In Section \ref{sec_application_MPC} we first
show how the distributed MPC problem for a network system can be
recast in the form of our optimization model. Then, we combine the
new dual algorithms with constraint tightening in order to ensure
primal feasibility and stability. Finally, in Section
\ref{sec_numerical} we provide extensive simulations in order to
certify the efficiency of the newly developed algorithms.

{\em Notation:} We work in the space $\rset^n$ composed by column
vectors. For $\bu, \bv \in \rset^n$ we denote the standard Euclidean
inner product $\langle \bu,\bv \rangle= \sum_{i=1}^n \bu_i \bv_i$,
norm $\left \| \bu \right \|=\sqrt{\langle \bu, \bu \rangle}$ and
projection onto non-negative orthant $\rset^n_+$ as $\left[ \bu
\right]_+$. We use $\langle \cdot,\cdot \rangle$, $\left \| \cdot
\right \|$ and $\left[ \cdot \right]_+$ for spaces of different
dimension. For a real number $\alpha$, $\lfloor{\alpha}\rfloor$
denotes the largest integer which is less than or equal to $\alpha$.
For any $\varepsilon \in [0,1]$ we say that a quantity $\textsl{q}$
is of order $\mathcal{O}(\textsl{p}(\varepsilon))$ if there exists
$c>0$ such that $\textsl{q} \leq c \textsl{p}(\varepsilon)$.
Further, for a convex set $\bo{U}$, $\text{relint}(\bo{U})$ denotes
the relative interior and $D_\bo{U}$ its diameter $D_\bo{U}=\max
\limits_{\bu, \bv \in \bo{U}} \|\bu - \bv\|$. For a matrix $G \in
\rset^{p \times n}$, $\|G\|$ and $\|G\|_F$ denote the $2$-norm and
Frobenius norm, respectively.


\subsection{Problem formulation} \label{sec_formulation_MPC}


We are interested in solving the following large-scale smooth
convex optimization problem:
\begin{equation}\label{original_primal}
F^*= \min_{\mathbf{u} \in \bo{U}} \left\{ F(\mathbf{u}):
~~h(\mathbf{u}) \leq 0\right\},
\end{equation}
where $F:\rset^n \rightarrow \rset$ and  the components of
$h:\rset^n \rightarrow \rset^p$ are convex functions, and $\bo{U}
\subseteq \rset^n$ is a compact, convex set. Further, we assume that
$F$ and the components of $h$ are twice differentiable. We also
assume that the projection on the set defined by the complicating
constraints  (called also \textit{coupling constraints}) $h(\bo{u})
\leq 0$ is hard to compute, but the set $\bo{U}$ is simple, i.e. the
projection on this set can be computed very efficiently (e.g.
hyperbox, Euclidean ball, etc).

In this paper we consider the following assumptions:
\begin{assumption}
\label{as_strong_lipschitz} $(i)$ Function $F$ is
$\sigma_{\text{F}}$-strongly convex w.r.t. $\|\cdot\|$ (see
\cite[Definition 2.1.2]{Nes:04}).

$(ii)$ The Jacobian of $h$ is bounded on $\bo{U}$, i.e. there exists
a constant $c_\text{h} > 0$ such that:  \[ \| \nabla h(\bo{u}) \|_F
\leq c_\text{h} \;\; \forall \bo{u} \in \bo{U}. \]
\end{assumption}

\begin{assumption} \label{as_Slater} Slater condition holds for
 \eqref{original_primal}, i.e. exists  $\tilde{\bo{u}}
\in \text{relint}(\bo{U})$ with $h(\tilde{\bo{u}}) < 0$.
\end{assumption}

Note that as a consequence of Assumption \eqref{as_Slater}, we
have that strong duality holds for \eqref{original_primal}.



\section{Solving the dual problem using inexact first order methods}
\label{sec_mainsec}
 Our goal is to solve the optimization problem
\eqref{original_primal} using dual gradient based methods. In order
to update the dual variables we use inexact dual gradient methods
(Sections \ref{sec_dg} and \ref{sec_dfg}), while the inner problems
are solved up to a certain accuracy by means of a parallel
coordinate descent algorithm (Section \ref{sec_coordinate}). An
important feature of our algorithms consists of the fact that even
if we use the inexact  gradient of the dual function, after a
certain number $k_\text{out}$ of outer iterations, we are still able
to compute a sequence of primal variables
$\bo{\hat{u}}^{k_\text{out}}$ which are $\eo$-optimal and their
feasibility violation is also less than $\mathcal{O}(\eo)$, i.e.:
\begin{equation}
\label{condition_outer} \bo{\hat{u}}^{k_\text{out}} \!\in \bo{U},
\; \|[h(\bo{\hat{u}}^{k_\text{out}})]^+\| \leq \mathcal{O} (\eo)
\;\; \text{and} \;\; -\mathcal{O}(\eo)\leq
F(\bo{\hat{u}}^{k_\text{out}}) - F^* \leq \mathcal{O} (\eo).
\end{equation}


\subsection{A framework for inexact first order information}
We assume that the projection on $\bo{U}$ is simple but the
projection on the set defined by the coupling constraints $h(\bo{u})
\leq 0$ is hard to compute. Therefore, we move the complicating
constraints into  the cost via Lagrange multipliers and define the
dual function:
\begin{align}
\label{dual_fc} d(\lambda) = \min_{\mathbf{u} \in
\bo{U}}\mathcal{L}(\mathbf{u},\lambda),
\end{align}
where $\mathcal{L}(\mathbf{u},\lambda)=F(\mathbf{u}) + \langle
\lambda, h(\mathbf{u}) \rangle$ denotes the partial Lagrangian
w.r.t. the complicating constraints $h(\bo{u})\leq 0$. We also
denote by $\bo{u}(\lambda)$ an optimal solution of the \textit{inner
problem}:
\begin{align}
\label{ul} \bo{u}(\lambda)  \in \arg \min_{\mathbf{u} \in \bo{U}}
\mathcal{L}(\mathbf{u},\lambda).
\end{align}
Based on Assumption \ref{as_strong_lipschitz} the gradient of the
dual function $d(\lambda)$ is given by \cite[Appendix A]{BerTsi:89}:
\[ \nabla  d(\lambda) = h(\bo{u}(\lambda)). \]
The following lemma gives a characterization of the Lipschitz
property for  the gradient $\nabla d (\lambda)$:

\begin{lemma}[see Appendix]
\label{technical_Lipschitz} Let the function $F$ and the
components of $h$ be twice differentiable and Assumption
\ref{as_strong_lipschitz} holds. Then, the gradient $\nabla
d(\lambda)$ is Lipschitz continuous with constant:
\begin{equation*}
L_{\text{d}}=\frac{c_\text{h}^2}{\sigma_{\text{F}}}.
\end{equation*}
\end{lemma}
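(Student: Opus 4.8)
The plan is to exploit the explicit form of the dual gradient, $\nabla d(\lambda) = h(\bo{u}(\lambda))$, and reduce the whole statement to two quantitative estimates. Fix two multipliers $\lambda_1,\lambda_2 \geq 0$ and write $\bo{u}_1 = \bo{u}(\lambda_1)$, $\bo{u}_2 = \bo{u}(\lambda_2)$, which are well defined and \emph{unique} because for $\lambda \geq 0$ the function $\mathcal{L}(\cdot,\lambda) = F(\cdot) + \langle \lambda, h(\cdot)\rangle$ is $\sigma_{\text{F}}$-strongly convex (it is $F$ plus a convex term, the components of $h$ being convex and the weights $\lambda_i$ nonnegative). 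Then $\|\nabla d(\lambda_1) - \nabla d(\lambda_2)\| = \|h(\bo{u}_1) - h(\bo{u}_2)\|$, and I will bound this through the chain
\[
\|h(\bo{u}_1) - h(\bo{u}_2)\| \;\leq\; c_{\text{h}}\,\|\bo{u}_1 - \bo{u}_2\| \;\leq\; \frac{c_{\text{h}}^2}{\sigma_{\text{F}}}\,\|\lambda_1 - \lambda_2\|.
\]
The first inequality is the easy one: writing $h(\bo{u}_1) - h(\bo{u}_2) = \int_0^1 \nabla h(\bo{u}_2 + t(\bo{u}_1-\bo{u}_2))^{\top}(\bo{u}_1 - \bo{u}_2)\,dt$ by the fundamental theorem of calculus and using $\|\nabla h(\bo{u})\| \leq \|\nabla h(\bo{u})\|_F \leq c_{\text{h}}$ on $\bo{U}$ (Assumption \ref{as_strong_lipschitz}$(ii)$) gives exactly the Lipschitz bound on $h$ with constant $c_{\text{h}}$.

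The substantive work is the second inequality, the \emph{stability of the inner minimizer} in the multiplier. First I would write the first-order optimality conditions for the two constrained inner problems: since $\bo{u}_i$ minimizes $\mathcal{L}(\cdot,\lambda_i)$ over the convex set $\bo{U}$,
\[
\langle \nabla_{\bo{u}}\mathcal{L}(\bo{u}_1,\lambda_1),\, \bo{u}_2 - \bo{u}_1\rangle \geq 0, \qquad \langle \nabla_{\bo{u}}\mathcal{L}(\bo{u}_2,\lambda_2),\, \bo{u}_1 - \bo{u}_2\rangle \geq 0 .
\]
Adding these two variational inequalities yields $\langle \nabla_{\bo{u}}\mathcal{L}(\bo{u}_1,\lambda_1) - \nabla_{\bo{u}}\mathcal{L}(\bo{u}_2,\lambda_2),\, \bo{u}_2 - \bo{u}_1\rangle \geq 0$, into which I substitute $\nabla_{\bo{u}}\mathcal{L}(\bo{u},\lambda) = \nabla F(\bo{u}) + \nabla h(\bo{u})\lambda$.

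The main obstacle is precisely the cross term $\nabla h(\bo{u}_1)\lambda_1 - \nabla h(\bo{u}_2)\lambda_2$: unlike the linear-constraint case, the Jacobian is evaluated at two different points \emph{and} paired with two different multipliers, so it cannot be factored directly. I would resolve this by adding and subtracting $\nabla h(\bo{u}_2)\lambda_1$, splitting the cross term into $[\nabla h(\bo{u}_1) - \nabla h(\bo{u}_2)]\lambda_1$ and $\nabla h(\bo{u}_2)(\lambda_1 - \lambda_2)$. The first piece is the gradient difference of the convex function $\langle \lambda_1, h(\cdot)\rangle$ (here $\lambda_1 \geq 0$ is essential), so by monotonicity its pairing with $\bo{u}_2 - \bo{u}_1$ is $\leq 0$ and can be discarded; the $\nabla F$ term contributes $-\sigma_{\text{F}}\|\bo{u}_1 - \bo{u}_2\|^2$ by strong convexity. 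What survives is
\[
\sigma_{\text{F}}\,\|\bo{u}_1 - \bo{u}_2\|^2 \;\leq\; \langle \nabla h(\bo{u}_2)(\lambda_1 - \lambda_2),\, \bo{u}_2 - \bo{u}_1\rangle \;\leq\; c_{\text{h}}\,\|\lambda_1 - \lambda_2\|\,\|\bo{u}_1 - \bo{u}_2\|,
\]
using Cauchy--Schwarz and $\|\nabla h(\bo{u}_2)\| \leq c_{\text{h}}$. Dividing by $\|\bo{u}_1 - \bo{u}_2\|$ gives the required bound $\|\bo{u}_1 - \bo{u}_2\| \leq (c_{\text{h}}/\sigma_{\text{F}})\|\lambda_1 - \lambda_2\|$, and combining with the first step delivers $L_{\text{d}} = c_{\text{h}}^2/\sigma_{\text{F}}$. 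I expect the only delicate points to be the sign bookkeeping in the added variational inequalities and making explicit the restriction $\lambda \geq 0$, which is what legitimizes both the strong convexity of $\mathcal{L}(\cdot,\lambda)$ and the monotonicity argument used to kill the troublesome cross term.
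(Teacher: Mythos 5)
Your proof is correct, but it follows a genuinely different route from the paper's. The paper argues at second order: it computes the dual Hessian $\nabla^2 d(\lambda)$ explicitly by implicitly differentiating the optimality condition $\nabla F(\bo{u}(\lambda)) + \nabla h(\bo{u}(\lambda))^T\lambda = 0$, discards the positive semidefinite term $\sum_{i=1}^p \lambda_i \nabla^2 h_i(\bo{u}(\lambda))$, bounds $\|\nabla^2 d(\lambda)\| \leq c_{\text{h}}^2/\sigma_{\text{F}}$, and concludes via Lemma 1.2.2 of Nesterov. Since that implicit differentiation is only available when the inner problem is unconstrained, the paper must handle compact $\bo{U}$ by a separate limiting argument: it perturbs the inner problem with a self-concordant barrier $\tau b_{\bo{U}}$, obtains the same constant for every $\tau > 0$, and lets $\tau \to 0^+$. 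Your argument is first order throughout: the two variational inequalities, the splitting of the cross term, monotonicity of the gradient of $\langle \lambda_1, h(\cdot)\rangle$ (legitimate precisely because $\lambda_1 \geq 0$), and strong convexity of $F$ give the minimizer-stability bound $\|\bo{u}(\lambda_1)-\bo{u}(\lambda_2)\| \leq (c_{\text{h}}/\sigma_{\text{F}})\|\lambda_1-\lambda_2\|$ directly, with no case distinction between $\bo{U}=\rset^n$ and compact $\bo{U}$ and no limiting procedure. What your approach buys: it is shorter, it uses only once-differentiability of $F$ and $h$ (the twice differentiability assumed in the lemma is never needed by your argument), and it sidesteps the regularity facts the paper invokes implicitly, namely the differentiability of $\lambda \mapsto \bo{u}(\lambda)$ needed to differentiate the optimality system. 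What the paper's approach buys: an explicit expression for the dual Hessian, which carries more structural information than the Lipschitz constant alone. Two minor bookkeeping points in your write-up, neither affecting correctness: your $\nabla h(\bo{u})\lambda$ is the paper's $\nabla h(\bo{u})^T\lambda$ (opposite Jacobian convention), and the final division by $\|\bo{u}_1-\bo{u}_2\|$ should be accompanied by the trivial remark that the bound holds when $\bo{u}_1=\bo{u}_2$.
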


\noindent Under strong duality (see Assumption \ref{as_Slater}) we
have for the \textit{outer problem}:
\begin{align}
\label{dual_pr} F^* = \max_{\lambda \geq 0} d(\lambda),
\end{align}
for which we denote an optimal solution by $\lambda^*$. Since we
cannot usually solve the inner optimization problem \eqref{ul}
exactly, but with some inner accuracy  obtaining an approximate
optimal solution $\bo{\bar u}(\lambda)$, we have to use inexact
gradients and approximate values of the dual function $d$. Thus, we
introduce the following two notions:
\[ \bar{d}(\lambda) = \mathcal{L}(\bo{\bar u}(\lambda),\lambda) ~\mathrm{and}~{\bar \nabla}  d(\lambda) = h(\bo{\bar u}(\lambda)).  \]
If we assume that $\bo{\bar u}(\lambda)$ is computed such that the
following inner $\ei$-optimality holds:
\begin{equation}
\label{inner_crit} \mathbf{ \bar u}(\lambda) \in \bo{U}, \;\;
\mathcal{L}(\mathbf{ \bar
u}(\lambda),\lambda)-\mathcal{L}(\mathbf{u}(\lambda),\lambda) \leq
\frac{\ei}{3},
\end{equation}
 then the
next lemma provides  bounds for the dual function $d(\lambda)$ in
terms of a linear and  a quadratic model which use only approximate
information of the dual function and of its gradient.
\begin{lemma}\cite[Section 3.2]{DevGli:11} Let Assumptions \ref{as_strong_lipschitz} and
\ref{as_Slater} hold and for a given $\lambda$
let $\mathbf{\bar u}(\lambda)$ be computed such that
\eqref{inner_crit} is satisfied. Then, the following inequalities
are valid:
\begin{align}
\label{ineq_approx} 0 \geq d(\mu)-[ \bar{d}(\lambda)+ \langle {
\bar \nabla} d(\lambda),\mu -\lambda \rangle ] \geq - L_{\text{d}}
\|\mu-\lambda\|^2-\ei \quad \forall \mu \in \rset^p_{+}.
\end{align}
\end{lemma}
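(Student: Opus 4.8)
The plan is to reduce both inequalities to a comparison between $d(\mu)$ and the exact partial Lagrangian evaluated at the approximate inner solution. The starting observation is the identity
\[
\bar{d}(\lambda) + \langle \bar{\nabla} d(\lambda), \mu - \lambda\rangle = \mathcal{L}(\bo{\bar u}(\lambda), \mu),
\]
which follows at once from the definitions $\bar{d}(\lambda) = \mathcal{L}(\bo{\bar u}(\lambda), \lambda)$ and $\bar{\nabla} d(\lambda) = h(\bo{\bar u}(\lambda))$ together with the linearity of $\mathcal{L}(\bo{u}, \cdot)$ in the multiplier. With this in hand the quantity to be bounded is just $d(\mu) - \mathcal{L}(\bo{\bar u}(\lambda), \mu)$, which makes the two inequalities symmetric to handle.

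The left (upper) inequality is then essentially free: since $\bo{\bar u}(\lambda) \in \bo{U}$ and $d(\mu) = \min_{\bo{u}\in\bo{U}} \mathcal{L}(\bo{u}, \mu)$, we get $d(\mu) \le \mathcal{L}(\bo{\bar u}(\lambda),\mu)$, i.e. the $\le 0$ bound, and this holds for every $\mu$ (the sign restriction $\mu \ge 0$ is not needed here). For the right (lower) inequality I would split
\[
\mathcal{L}(\bo{\bar u}(\lambda),\mu) - d(\mu) = \big[\mathcal{L}(\bo{\bar u}(\lambda),\mu) - \mathcal{L}(\bo{u}(\lambda),\mu)\big] + \big[\mathcal{L}(\bo{u}(\lambda),\mu) - d(\mu)\big],
\]
isolating the inner inexactness (first bracket) from the exact smoothness gap (second bracket). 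For the second bracket, the same identity applied to the \emph{exact} solution gives $\mathcal{L}(\bo{u}(\lambda),\mu) = d(\lambda) + \langle \nabla d(\lambda), \mu-\lambda\rangle$, so by the quadratic lower bound implied by Lemma \ref{technical_Lipschitz} this bracket is at most $\frac{L_{\text{d}}}{2}\|\mu-\lambda\|^2$; this is where the restriction $\mu\ge 0$ (together with $\lambda\ge 0$) enters, since it guarantees that $\nabla d$ exists and is $L_{\text{d}}$-Lipschitz along the whole segment $[\lambda,\mu]\subset\rset^p_+$. For the first bracket I would subtract the evaluations at $\lambda$ and at $\mu$ to get $\mathcal{L}(\bo{\bar u}(\lambda),\mu) - \mathcal{L}(\bo{u}(\lambda),\mu) = \big[\mathcal{L}(\bo{\bar u}(\lambda),\lambda) - \mathcal{L}(\bo{u}(\lambda),\lambda)\big] + \langle \mu-\lambda, h(\bo{\bar u}(\lambda)) - h(\bo{u}(\lambda))\rangle$, where the first summand is at most $\ei/3$ by the inner optimality criterion \eqref{inner_crit}.

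The real work, and the step I expect to be the main obstacle, is controlling the cross term $\langle \mu-\lambda, h(\bo{\bar u}(\lambda)) - h(\bo{u}(\lambda))\rangle$, i.e. the error of the inexact dual gradient. I would bound it in three moves. First, the Jacobian bound of Assumption \ref{as_strong_lipschitz}(ii) gives $\|h(\bo{\bar u}(\lambda)) - h(\bo{u}(\lambda))\| \le c_\text{h}\|\bo{\bar u}(\lambda)-\bo{u}(\lambda)\|$. Second, $\sigma_{\text{F}}$-strong convexity of $\mathcal{L}(\cdot,\lambda)$ (here $\lambda\ge 0$ is used so that $\langle\lambda,h(\cdot)\rangle$ is convex) applied at its minimizer $\bo{u}(\lambda)$ converts the inner accuracy into the primal distance estimate $\frac{\sigma_{\text{F}}}{2}\|\bo{\bar u}(\lambda)-\bo{u}(\lambda)\|^2 \le \ei/3$. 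Third, Cauchy--Schwarz followed by Young's inequality $ab \le \frac{L_{\text{d}}}{2}a^2 + \frac{1}{2L_{\text{d}}}b^2$ with $a=\|\mu-\lambda\|$ splits the cross term into a quadratic part $\frac{L_{\text{d}}}{2}\|\mu-\lambda\|^2$ and a constant part; using $L_{\text{d}}=c_\text{h}^2/\sigma_{\text{F}}$, the constant part evaluates to exactly $\ei/3$, which is precisely why \eqref{inner_crit} is stated with the factor $\ei/3$. Summing the two brackets then gives $\mathcal{L}(\bo{\bar u}(\lambda),\mu)-d(\mu) \le L_{\text{d}}\|\mu-\lambda\|^2 + \frac{2\ei}{3} \le L_{\text{d}}\|\mu-\lambda\|^2 + \ei$, which is exactly the claimed lower bound.
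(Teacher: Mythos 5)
Your proof is correct, but it takes a genuinely different route from the paper's, because the paper does not actually prove this lemma: it simply cites \cite[Section 3.2]{DevGli:11}, remarking that for general convex $h$ under Assumption \ref{as_strong_lipschitz}$(ii)$ one has $\|h(\bu)-h(\bv)\|\leq \sqrt{2}\,c_\text{h}\|\bu-\bv\|$, so that the argument given in \cite{DevGli:11} for linear $h$ (with stopping criterion $\ei/2$) carries over with the criterion tightened to $\ei/3$. You instead give a self-contained argument, and all of its steps check out: the identity $\bar{d}(\lambda)+\langle {\bar \nabla} d(\lambda),\mu-\lambda\rangle = \mathcal{L}(\bo{\bar u}(\lambda),\mu)$ is immediate from linearity of the Lagrangian in the multiplier; the upper bound follows from $\bo{\bar u}(\lambda)\in\bo{U}$; the split of the lower bound into the exact descent-lemma gap (at most $\tfrac{L_{\text{d}}}{2}\|\mu-\lambda\|^2$ by Lemma \ref{technical_Lipschitz}, valid since the segment $[\lambda,\mu]$ lies in $\rset^p_+$) plus an inexactness term is sound; and the inexactness term is correctly controlled by strong convexity of $\mathcal{L}(\cdot,\lambda)$ at its constrained minimizer ($\tfrac{\sigma_{\text{F}}}{2}\|\bo{\bar u}(\lambda)-\bo{u}(\lambda)\|^2\leq \ei/3$), the Jacobian bound, and Young's inequality with parameter $L_{\text{d}}$. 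Note that your use of the Lipschitz constant $c_\text{h}$ (rather than the paper's $\sqrt{2}c_\text{h}$) is legitimate, since the operator $2$-norm is dominated by the Frobenius norm and the segment $[\bo{u}(\lambda),\bo{\bar u}(\lambda)]$ stays in $\bo{U}$; as a consequence your constant term comes out as $2\ei/3$, slightly sharper than the stated $\ei$, whereas with $\sqrt{2}c_\text{h}$ one recovers exactly $\ei$, which is evidently how the factor $\ei/3$ in \eqref{inner_crit} was calibrated. What each approach buys: the paper's citation is short and leans on the established inexact-oracle framework of \cite{DevGli:11}; your proof makes the lemma verifiable within the paper itself, makes explicit exactly where $\lambda,\mu\geq 0$ are needed (convexity of $\langle\lambda,h(\cdot)\rangle$ and Lipschitzness of $\nabla d$ on $\rset^p_+$), and exposes the provenance of all the constants.
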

\begin{proof}
For linear functions $h$,  this lemma is proved in \cite[Section
3.2]{DevGli:11} with stopping criterion $\ei/2$ in
\eqref{inner_crit}. For general convex functions $h$ satisfying
Assumption \ref{as_strong_lipschitz} $(ii)$ we can easily show that
$\| h(\bu) - h(\bv) \| \leq \sqrt{2} c_\text{h} \| \bu - \bv\| $ and
then following exactlty the same steps as in \cite{DevGli:11} we get
the result in \eqref{ineq_approx}.
\end{proof}

\begin{remark} Relation \eqref{inner_crit} represents
the stopping criterion for solving the inner problem \eqref{ul}.
Many optimization methods offer direct control of this criterion
(see e.g. the method of Section \ref{sec_coordinate}). For affine
functions $h$ (see e.g. MPC problems in Section
\ref{sec_application_MPC}),  the stopping criterion in
\eqref{inner_crit} can be taken as \cite{DevGli:11}: $\mathbf{ \bar
u}(\lambda) \in \bo{U}$ and $\mathcal{L}(\mathbf{ \bar
u}(\lambda),\lambda)-\mathcal{L}(\mathbf{u}(\lambda),\lambda) \leq
\frac{\ei}{2}$.

\end{remark}


\subsection{Inexact dual gradient method for solving the dual
(outer) problem} \label{sec_dg}

In this section we analyze the convergence properties of an inexact
dual projected gradient algorithm for solving approximately  the
dual problem \eqref{dual_pr}. Let $\set{\alpha^j}_{j\geq 0}$ be a
sequence of positive numbers and $S^k = \sum_{j=0}^k \alpha^j$. We
consider the following inexact dual gradient algorithm:

\begin{center}
\framebox{
\parbox{9.5cm}{
\begin{center}
\textbf{ Algorithm {\bf (IDG)}$(\lambda^0)$ }
\end{center}
{Given $\lambda^0 \in \rset^p_+$, for $k\geq 0$ compute:}
\begin{enumerate}
\item{$\mathbf{ \bar u}^k \approx \arg \min\limits_{\mathbf{u} \in
\bo{U}} \mathcal{L}(\mathbf{u},\lambda^k)$ { such that
\eqref{inner_crit} holds}} \item
$\lambda^{k+1}=\left[\lambda^k+\alpha^k{\bar \nabla}
d(\lambda^k)\right]_+$.
\end{enumerate}
}}
\end{center}

Recall that  inexact gradient ${\bar \nabla} d(\lambda^k)=h(\bo{
\bar u}^k)$ and $\alpha^k \in
\left[\frac{1}{2\underline{L}},\frac{1}{2L_{\mathrm{d}}}\right]$
is a given step size with~$\underline{L} \geq L_{\mathrm{d}}$. The
following theorem provides an estimate on the dual suboptimality
for algorithm {\bf (IDG)}:
\begin{theorem}
\label{dual_optimality_grad} Let Assumptions
\ref{as_strong_lipschitz} and \ref{as_Slater} hold and the sequences
$\left(\mathbf{ \bar u}^k,\lambda^k \right)_{k\geq 0}$ be generated
by algorithm {\bf (IDG)} and define the average sequence of  dual
variables $\hat{\lambda}^k =\frac{1}{S^k}\sum_{j=0}^{k} \alpha^j
\lambda^{j+1}$. Then, the following estimate on dual suboptimality
can be derived for  dual problem \eqref{dual_pr}:
\begin{equation}
\label{eq_conv_dual_grad} F^* - d(\hat{\lambda}^k) \leq
\frac{\underline{L} R_{\mathrm{d}}^2}{k+1}+\epsilon_{\text{in}},
\end{equation}
where we define:
\begin{equation*}
R_{\mathrm{d}}=\|\lambda^*-\lambda^0\|.
\end{equation*}
\end{theorem}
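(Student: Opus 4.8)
The plan is to run the standard inexact projected-gradient telescoping argument, but carried out on the concave dual $d$ (a maximization) using the two-sided model inequality \eqref{ineq_approx} in place of the usual exact descent lemma. Throughout I write $\bar\nabla d(\lambda^k)=h(\bo{\bar u}^k)$ for the inexact gradient and use strong duality $F^*=d(\lambda^*)$.

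First I would extract the two model inequalities from \eqref{ineq_approx}. Taking $\mu=\lambda^*$ and $\lambda=\lambda^k$ in the left-hand (linear) bound gives the upper estimate $d(\lambda^*)\leq \bar d(\lambda^k)+\langle\bar\nabla d(\lambda^k),\lambda^*-\lambda^k\rangle$, while taking $\mu=\lambda^{k+1}$ and $\lambda=\lambda^k$ in the right-hand (quadratic) bound gives the lower estimate $d(\lambda^{k+1})\geq \bar d(\lambda^k)+\langle\bar\nabla d(\lambda^k),\lambda^{k+1}-\lambda^k\rangle - L_{\mathrm{d}}\|\lambda^{k+1}-\lambda^k\|^2-\ei$. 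These replace the exact inequalities one would normally obtain from convexity and the descent lemma, and it is precisely the additive $\ei$ in the second one that will propagate into the final bound.

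Next I would bring in the projection step. Since $\lambda^{k+1}=[\lambda^k+\alpha^k\bar\nabla d(\lambda^k)]_+$ is the Euclidean projection of $\lambda^k+\alpha^k\bar\nabla d(\lambda^k)$ onto $\rset^p_+$, its optimality condition reads $\langle \lambda^k+\alpha^k\bar\nabla d(\lambda^k)-\lambda^{k+1},\,\mu-\lambda^{k+1}\rangle\leq 0$ for every $\mu\geq 0$. Setting $\mu=\lambda^*$ and rewriting $\langle\lambda^{k+1}-\lambda^k,\lambda^*-\lambda^{k+1}\rangle$ via the three-point identity $2\langle a-b,c-a\rangle=\|c-b\|^2-\|c-a\|^2-\|a-b\|^2$, I bound $\alpha^k\langle\bar\nabla d(\lambda^k),\lambda^*-\lambda^{k+1}\rangle$ by $\tfrac12[\|\lambda^*-\lambda^k\|^2-\|\lambda^*-\lambda^{k+1}\|^2-\|\lambda^{k+1}-\lambda^k\|^2]$. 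Splitting $\lambda^*-\lambda^{k+1}=(\lambda^*-\lambda^k)+(\lambda^k-\lambda^{k+1})$ and substituting the two model inequalities, the $\bar d(\lambda^k)$ terms cancel and I obtain
\[
\alpha^k[d(\lambda^*)-d(\lambda^{k+1})]\leq \tfrac12\bigl[\|\lambda^*-\lambda^k\|^2-\|\lambda^*-\lambda^{k+1}\|^2\bigr]+\bigl(\alpha^k L_{\mathrm{d}}-\tfrac12\bigr)\|\lambda^{k+1}-\lambda^k\|^2+\alpha^k\ei .
\]
The crucial point, and the step I expect to require the most care, is that the step-size window $\alpha^k\leq 1/(2L_{\mathrm{d}})$ forces $\alpha^k L_{\mathrm{d}}-\tfrac12\leq 0$, so the quadratic term $\|\lambda^{k+1}-\lambda^k\|^2$ can simply be discarded; this is exactly where the step-size constraint and the inexactness must be reconciled.

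Finally I would sum the per-iteration inequality over $j=0,\dots,k$. The distance terms telescope and the surviving $-\|\lambda^*-\lambda^{k+1}\|^2$ is nonpositive and hence dropped, giving $\sum_{j=0}^k\alpha^j[d(\lambda^*)-d(\lambda^{j+1})]\leq \tfrac12 R_{\mathrm{d}}^2+\ei S^k$. Dividing by $S^k$ and invoking concavity of $d$ (Jensen's inequality, since $\hat\lambda^k$ is the convex combination of the $\lambda^{j+1}$ with weights $\alpha^j/S^k$) to pass from the weighted average of $d(\lambda^{j+1})$ to $d(\hat\lambda^k)$ yields $d(\lambda^*)-d(\hat\lambda^k)\leq R_{\mathrm{d}}^2/(2S^k)+\ei$. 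Since $\alpha^j\geq 1/(2\underline{L})$ we have $S^k\geq (k+1)/(2\underline{L})$, whence $R_{\mathrm{d}}^2/(2S^k)\leq \underline{L}R_{\mathrm{d}}^2/(k+1)$, which is precisely \eqref{eq_conv_dual_grad}.
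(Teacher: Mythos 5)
Your proposal is correct and follows essentially the same route as the paper's proof: the projection optimality condition combined with the three-point identity is algebraically identical to the paper's expansion of $r^{j+1}_{\lambda}=\|\lambda^{j+1}-\lambda\|^2$, both invoke the two sides of \eqref{ineq_approx} in the same way, use $\alpha^j\leq 1/(2L_{\mathrm{d}})$ to discard the quadratic term, and finish by telescoping, Jensen's inequality on the concave dual, and $S^k\geq (k+1)/(2\underline{L})$. The only cosmetic difference is that you fix $\lambda=\lambda^*$ from the start, whereas the paper keeps a generic $\lambda\geq 0$ (which it reuses later for the feasibility and primal-suboptimality theorems).
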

\proof Let us first notice that the update of the dual variables
can be equivalently written as $\lambda^{k+1}=\arg \min
\limits_{\lambda \geq 0}{\left[
\frac{1}{2\alpha^k}\left\|\lambda-\lambda^k\right\|^2-\iprod{{\bar
\nabla} d(\lambda^k),\lambda-\lambda^k}\right]}$, for which the
optimality condition reads:
\begin{equation}
\label{opt_cond_iter_grad}
\iprod{\lambda^{k+1}-\lambda^k-\alpha^k{\bar \nabla}
d(\lambda^k),\lambda - \lambda^{k+1}} \geq 0 ~~~ \forall \lambda
\geq 0.
\end{equation}
If we now define $r^{j}_{\lambda}=\|\lambda^j-\lambda\|^2$ for any
$\lambda \geq 0$, then we have:
\begin{align}
\label{inequalities_gradient} r^{j+1}_{\lambda}
&=\|\lambda^{j+1}-\lambda^j+\lambda^j-\lambda\|^2=
r^{j}_{\lambda}+2\langle
\lambda^{j+1}-\lambda^j,\lambda^{j}-\lambda^{j+1}+\lambda^{j+1}-\lambda
\rangle+\|\lambda^{j+1}-\lambda^j\|^2 \nonumber\\
&=r^{j}_{\lambda}+2\langle
\lambda^{j+1}-\lambda^j,\lambda^{j+1}-\lambda
\rangle-\|\lambda^{j+1}-\lambda^j\|^2 \nonumber \\
&\overset{\eqref{opt_cond_iter_grad}}{\leq}
r^{j}_{\lambda}-2\alpha^j\langle \bar{\nabla}
d(\lambda^j),\lambda-\lambda^j\rangle +2\alpha^j\left[\langle
\bar{\nabla}
d(\lambda^j),\lambda^{j+1}\!\!-\!\!\lambda^j\rangle\!-
\!L_{\text{d}}\|\lambda^{j+1}\!\!-\!\!\lambda^j\|^2\right]  \\
& \overset{\eqref{ineq_approx}}{\leq}
r^{j}_{\lambda}+2\alpha^j\left[\bar{d}(\lambda^j)-d(\lambda)\right]+2\alpha^j\left[d(\lambda^{j+1})-\bar{d}(\lambda^j)+\ei\right]
\nonumber \\
&=
r^{j}_{\lambda}+2\alpha^j[d(\lambda^{j+1})-d(\lambda)+\epsilon_{\text{in}}]
~~ \forall \lambda \geq 0, \nonumber
\end{align}
where in the first inequality we use the fact that $\alpha^j \leq
\frac{1}{2L_{\mathrm{d}}}$. Summing up these inequalities for
$j=0,\dots,k$ and using the definition of $\hat{\lambda}^k$ we can
write:
\begin{equation*}
2S^k \left[d(\lambda)-d(\hat{\lambda}^k)\right] \leq
r_{\lambda}^0+2S^k\ei ~~~ \forall \lambda \geq 0.
\end{equation*}
Letting now $\lambda = \lambda^*$, dividing both sides of the
previous inequality by $2S^k$ and taking into account that $S^k
\geq \frac{k+1}{2\underline{L}}$ we obtain
\eqref{eq_conv_dual_grad}. \qed

\noindent We can observe that the first term in the estimate
\eqref{eq_conv_dual_grad} represents the standard rate of
convergence  of the gradient method for the class of smooth
functions \cite{Nes:04}. Also, the second term $\ei$  is the error
induced by the fact that the gradient is computed only approximately
and shows that algorithm {\bf (IDG)} does not accumulate errors.

However, we are now interested in finding estimates for primal
feasibility violation for original problem \eqref{original_primal}.
Let us introduce  the following average primal sequence:
\begin{equation} \label{avg} \mathbf{ \hat
u}^k=\frac{1}{S^k}\sum_{j=0}^k \alpha^j \mathbf{\bar{u}}^j.
\end{equation}
The following  theorem provides  an estimate on primal feasibility
violation for problem \eqref{original_primal}:
\begin{theorem}
\label{theorem_fesa_grad} Under the assumptions of Theorem
\ref{dual_optimality_grad} and with $\mathbf{ \hat u}^k$ defined in
\eqref{avg}, the following estimate on primal feasibility violation
can be derived for the original problem \eqref{original_primal}:
\begin{equation}
\label{eq_fesa_grad} \|[h(\mathbf{ \hat u}^k)]_+\| \leq
v(k,\epsilon_{\text{in}}) \;\; \forall k \geq 0,
\end{equation}
where $ v(k,
\epsilon_{\text{in}})=\frac{4\underline{L}R_{\mathrm{d}}}{k+1}+\frac{6\underline{L}\|\lambda^0\|}{k+1}+2\sqrt{\frac{\underline{L}}{k+1}\ei}.$
\end{theorem}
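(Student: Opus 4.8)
The plan is to convert the feasibility question into a bound on the size of the dual iterate $\lambda^{k+1}$, and then to control that iterate by reusing the very recursion that proved Theorem~\ref{dual_optimality_grad}.

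First I would exploit the structure of the dual update together with the convexity of $h$. Since the positive part satisfies $[\mathbf{x}]_+ \geq \mathbf{x}$ componentwise, step 2 of {\bf (IDG)} gives $\lambda^{j+1} \geq \lambda^j + \alpha^j \bar\nabla d(\lambda^j) = \lambda^j + \alpha^j h(\mathbf{\bar u}^j)$, so telescoping over $j=0,\dots,k$ yields $\lambda^{k+1}-\lambda^0 \geq \sum_{j=0}^k \alpha^j h(\mathbf{\bar u}^j)$. Because every component of $h$ is convex and $\mathbf{\hat u}^k$ is the $\alpha$-weighted average \eqref{avg}, Jensen's inequality gives $S^k h(\mathbf{\hat u}^k) \leq \sum_{j=0}^k \alpha^j h(\mathbf{\bar u}^j) \leq \lambda^{k+1}-\lambda^0$ componentwise. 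Applying the monotone, sign-preserving positive part and using $\lambda^0 \geq 0$ turns this into $S^k [h(\mathbf{\hat u}^k)]_+ \leq \lambda^{k+1}$, hence after taking norms
\[
\|[h(\mathbf{\hat u}^k)]_+\| \leq \frac{\|\lambda^{k+1}\|}{S^k}.
\]
This reduces the whole statement to a bound on $\|\lambda^{k+1}\|$ together with the lower bound $S^k \geq \frac{k+1}{2\underline{L}}$ already established in Theorem~\ref{dual_optimality_grad}.

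Second, I would bound the dual iterate by reusing inequality \eqref{inequalities_gradient}, which is valid for every $\lambda \geq 0$. Evaluating it at $\lambda=\lambda^*$ and writing $r^j=\|\lambda^j-\lambda^*\|^2$, the term $d(\lambda^{j+1})-d(\lambda^*)$ is non-positive by optimality of $\lambda^*$ for the outer problem \eqref{dual_pr}, so the recursion collapses to $r^{j+1} \leq r^j + 2\alpha^j \ei$. Summing over $j=0,\dots,k$ gives $\|\lambda^{k+1}-\lambda^*\|^2 \leq R_{\mathrm{d}}^2 + 2 S^k \ei$. I expect this to be the main obstacle and the conceptual crux: it is exactly here that one must show the inexactness does not accumulate, the $\ei$ error surviving only through the harmless non-positive suboptimality terms, and it is also where the step-size restriction $\alpha^j \leq 1/(2L_{\mathrm{d}})$ (needed to validate \eqref{inequalities_gradient}) is consumed.

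Finally I would combine the two parts. Using $\sqrt{a+b}\leq\sqrt{a}+\sqrt{b}$ the iterate bound reads $\|\lambda^{k+1}-\lambda^*\| \leq R_{\mathrm{d}} + \sqrt{2 S^k \ei}$, and with $\|\lambda^*\| \leq \|\lambda^*-\lambda^0\| + \|\lambda^0\| = R_{\mathrm{d}} + \|\lambda^0\|$ the triangle inequality gives $\|\lambda^{k+1}\| \leq 2R_{\mathrm{d}} + \|\lambda^0\| + \sqrt{2 S^k \ei}$. Dividing by $S^k$, using $1/S^k \leq 2\underline{L}/(k+1)$ on the first two terms and $\sqrt{2\ei/S^k} \leq 2\sqrt{\underline{L}\ei/(k+1)}$ on the last, reproduces the three terms of $v(k,\ei)$ in \eqref{eq_fesa_grad}; the precise leading constants (in particular the coefficient of $\|\lambda^0\|$) come from routine over-bounding in these triangle inequalities. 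The upshot is that the uniform boundedness of the dual iterates in the inexact regime is what yields the $\mathcal{O}(1/k)$ feasibility rate while confining the persistent inner error to the $\mathcal{O}(\sqrt{\ei/k})$ term.
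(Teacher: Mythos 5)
Your proposal is correct and follows essentially the same route as the paper: the identical telescoping-plus-convexity reduction to $\|[h(\mathbf{\hat u}^k)]_+\| \leq \|\lambda^{k+1}\|/S^k$, followed by the same use of \eqref{inequalities_gradient} at $\lambda=\lambda^*$ with $d(\lambda^{j+1})\leq d(\lambda^*)$ to bound the dual iterates, and finally division by $S^k \geq \frac{k+1}{2\underline{L}}$. The only (cosmetic) difference is in the last step: where the paper expands $\|\lambda^0-\lambda^*\|^2 \leq \|\lambda^0\|^2+\|\lambda^*\|^2$, applies Cauchy--Schwartz, and bounds $\|\lambda^{k+1}\|$ by the largest root of a quadratic (producing the coefficient $6\underline{L}\|\lambda^0\|/(k+1)$), you bound $\|\lambda^{k+1}\| \leq \|\lambda^{k+1}-\lambda^*\|+\|\lambda^*\|$ directly by the triangle inequality, which is simpler and in fact yields the slightly tighter coefficient $2\underline{L}\|\lambda^0\|/(k+1)$, still implying \eqref{eq_fesa_grad}.
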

\proof Using the definition of $\lambda^{j+1}$ we have that the
following component-wise inequalities hold: $\lambda^j+\alpha^j
\bar{\nabla} d(\lambda^j) \leq \lambda^{j+1}$ for all $j \geq 0$.
Summing up these inequalities for $j=0,\dots,k$ and taking into
account that $\bar{\nabla} d(\lambda^j)=h(\mathbf{ \bar u}^j)$ we
obtain: $\sum_{j=0}^k  \alpha^j h(\mathbf{ \bar u}^j) \leq
\lambda^{k+1}-\lambda^0 \leq \lambda^{k+1}$,  which together with
the convexity of $h$ gives: $h(\mathbf{ \hat u}^k) \leq
\frac{\lambda^{k+1}}{S^k}$. Since $\lambda^{k+1} \geq 0$ we also
have that $0 \leq \left[h(\mathbf{ \hat u}^k) \right]_+ \leq
\frac{\lambda^{k+1}}{S^k}$ and thus we can  further  write:
\begin{equation}
\label{fesability+multipliers} \|\left[h(\mathbf{ \hat u}^k)
\right]_+ \| \leq \frac{\|\lambda^{k+1}\|}{S^k}.
\end{equation}
Thus, in order to find an estimate on primal feasibility violation,
we have to upper bound the norm of the dual sequence
$\lambda^{k+1}$. For this purpose we can use
\eqref{inequalities_gradient} with $\lambda=\lambda^*$:
\begin{align*}
\|\lambda^{j+1} - \lambda^*\|^2  \leq \|\lambda^j - \lambda^*\|^2
+ 2\alpha^j \left[ d(\lambda^{j+1}) - d(\lambda^*) +
\epsilon_{\text{in}} \right].
\end{align*}
Summing up these inequalities for $j=0, \dots, k$, using
$\iprod{\lambda^0,\lambda^*} \geq 0$ and $d(\lambda^{j+1}) \leq
d(\lambda^*)$, we~get:
\begin{equation*}
\|\lambda^{k+1}-\lambda^*\|^2  \leq \|\lambda^*\|^2
+\|\lambda^0\|^2 +2S^k\epsilon_{\text{in}},
\end{equation*}
Now, using the Cauchy-Schwartz inequality we get the  second order
inequality in $\|\lambda^{k+1}\|$:
\begin{equation*}
\|\lambda^{k+1}\|^2-2\|
\lambda^*\|\|\lambda^{k+1}\|-\|\lambda^0\|^2-2S^k
\epsilon_{\text{in}} \leq 0.
\end{equation*}
Therefore, $\|\lambda^{k+1}\|$ must be less than the largest root
of the corresponding second-order equation:
\begin{align*}
\label{eq_bound_mult_grad} \|\lambda^{k+1}\| &\leq
\frac{2\|\lambda^*\|+\left[4\|\lambda^*\|^2+4\|\lambda^0\|^2+8S^k\ei\right]^{1/2}}{2}\leq
2\|\lambda^*\|+\|\lambda^0\|+\sqrt{2S^k\ei}\\
&\leq 2\|\lambda^*-\lambda^0\|+3\|\lambda^0\|+\sqrt{2S^k\ei},
\end{align*}
where in the second inequality we used that $\sqrt{\zeta_1+\zeta_2}
\leq \sqrt{\zeta_1}+\sqrt{\zeta_2}$. Introducing this inequality in
\eqref{fesability+multipliers} and taking into account that $S^k
\geq \frac{k+1}{2\underline{L}}$ we obtain \eqref{eq_fesa_grad}.\qed

\begin{theorem}
\label{theorem_primal_grad} Let the assumptions of Theorem
\ref{theorem_fesa_grad} hold. Then, the following estimates on
primal suboptimality can be derived for the original problem
\eqref{original_primal}:
\begin{equation}
\label{eq_primal_grad}
-\left(R_{\mathrm{d}}+\|\lambda^0\|\right)v(k,\ei)\leq F(\mathbf{
\hat u}^k) - F^* \leq \frac{
\underline{L}\|\lambda^0\|^2}{k+1}+\ei.
\end{equation}
\end{theorem}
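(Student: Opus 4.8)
The plan is to establish the two inequalities separately, since they require genuinely different arguments. I would first attack the \emph{upper bound} $F(\mathbf{\hat u}^k) - F^* \leq \frac{\underline{L}\|\lambda^0\|^2}{k+1} + \ei$. The natural starting point is the inner $\ei$-optimality relation \eqref{inner_crit}, which for each $j$ gives $\mathcal{L}(\mathbf{\bar u}^j, \lambda^j) \leq \mathcal{L}(\mathbf{u}(\lambda^j), \lambda^j) + \frac{\ei}{3} = d(\lambda^j) + \frac{\ei}{3}$. Unpacking the Lagrangian, this reads $F(\mathbf{\bar u}^j) + \langle \lambda^j, h(\mathbf{\bar u}^j)\rangle \leq d(\lambda^j) + \frac{\ei}{3} \leq F^* + \frac{\ei}{3}$, using weak duality $d(\lambda^j) \leq F^*$. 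I would then take the $\alpha^j$-weighted average over $j = 0,\dots,k$ and invoke convexity of $F$ (via Jensen) to pull $F(\mathbf{\hat u}^k)$ out of the averaged left-hand side. This yields $F(\mathbf{\hat u}^k) - F^* \leq \frac{\ei}{3} - \frac{1}{S^k}\sum_{j=0}^k \alpha^j \langle \lambda^j, h(\mathbf{\bar u}^j)\rangle$, so the whole game reduces to lower-bounding the weighted inner-product term. This is where I expect the telescoping structure from the dual update to re-enter.

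For that residual term I would exploit the update $\lambda^{j+1} = [\lambda^j + \alpha^j \bar\nabla d(\lambda^j)]_+$ together with $\bar\nabla d(\lambda^j) = h(\mathbf{\bar u}^j)$. The key algebraic identity is that $\alpha^j \langle \lambda^j, h(\mathbf{\bar u}^j)\rangle$ can be related to the change $\|\lambda^{j+1}\|^2 - \|\lambda^j\|^2$ via the nonexpansiveness of the projection and the optimality condition \eqref{opt_cond_iter_grad} specialized at $\lambda = 0$. Concretely, testing \eqref{opt_cond_iter_grad} with $\lambda = \lambda^j$ (or $\lambda=0$) produces sign information on the inner products that, after summation, telescopes the $\|\lambda^j\|^2$ terms. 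I anticipate the bound $-\sum_{j=0}^k \alpha^j \langle \lambda^j, h(\mathbf{\bar u}^j)\rangle \leq \tfrac{1}{2}\|\lambda^0\|^2$ (up to the $\ei$ bookkeeping), after which dividing by $S^k$ and using $S^k \geq \frac{k+1}{2\underline{L}}$ delivers the $\frac{\underline{L}\|\lambda^0\|^2}{k+1}$ term. This telescoping manipulation is the main technical obstacle, since it demands choosing exactly the right test vector in the projection inequality and carefully tracking the accumulated $\ei$ errors so they collapse to a single $\ei$ rather than growing with $k$.

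The \emph{lower bound} is comparatively routine and I would handle it via a standard duality/saddle-point argument. Since $F(\mathbf{\hat u}^k) \geq F^* + \langle \lambda^*, h(\mathbf{\hat u}^k)\rangle$ is \emph{not} directly available (we only have an approximate primal point), I would instead use the feasibility-type relation $h(\mathbf{\hat u}^k) \leq \frac{\lambda^{k+1}}{S^k}$ already derived in the proof of Theorem \ref{theorem_fesa_grad}. Combining the averaged Lagrangian inequality with $\langle \lambda^*, [h(\mathbf{\hat u}^k)]_+\rangle$ and bounding via Cauchy--Schwarz gives $F(\mathbf{\hat u}^k) - F^* \geq -\|\lambda^*\| \cdot \|[h(\mathbf{\hat u}^k)]_+\|$; then $\|\lambda^*\| \leq \|\lambda^* - \lambda^0\| + \|\lambda^0\| = R_{\mathrm{d}} + \|\lambda^0\|$ by the triangle inequality, and the feasibility estimate \eqref{eq_fesa_grad} bounds $\|[h(\mathbf{\hat u}^k)]_+\|$ by $v(k,\ei)$. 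This produces exactly the stated factor $(R_{\mathrm{d}} + \|\lambda^0\|)v(k,\ei)$, closing the argument.
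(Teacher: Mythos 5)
Your lower-bound argument is fine and is essentially the paper's own: $F^* = d(\lambda^*) \leq F(\hat{\bo u}^k) + \iprod{\lambda^*, h(\hat{\bo u}^k)}$ \emph{is} directly available (since $\hat{\bo u}^k \in \bo{U}$ as a convex combination of the $\bar{\bo u}^j$), and then Cauchy--Schwarz, the triangle inequality $\|\lambda^*\| \leq R_{\mathrm{d}} + \|\lambda^0\|$ and \eqref{eq_fesa_grad} finish it. The genuine gap is in the upper bound, at precisely the step you call the main technical obstacle. Your reduction leaves you needing $-\sum_{j=0}^k \alpha^j \iprod{\lambda^j, h(\bar{\bo u}^j)} \leq \tfrac{1}{2}\|\lambda^0\|^2$ up to $\ei$ bookkeeping, and neither of the natural routes delivers this. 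Plain nonexpansiveness of $[\cdot]_+$ gives $-2\alpha^j\iprod{\lambda^j,h(\bar{\bo u}^j)} \leq \|\lambda^j\|^2 - \|\lambda^{j+1}\|^2 + (\alpha^j)^2\|h(\bar{\bo u}^j)\|^2$, and the quadratic term neither telescopes nor vanishes: after summing and dividing by $S^k$ it contributes an $O(1)$ error (this is exactly why subgradient-type analyses only achieve $\mathcal{O}(1/\sqrt{k})$). To absorb that quadratic term you must invoke \eqref{ineq_approx}, but that inequality brings in $d(\lambda^{j+1}) - \bar{d}(\lambda^j)$; since you already spent the inner criterion plus weak duality per iterate ($\bar{d}(\lambda^j) \leq F^* + \ei/3$) in your first step, the only bound left is $\bar{d}(\lambda^j) \geq d(\lambda^j)$, and the resulting sum telescopes to $\alpha\left[d(\lambda^{k+1}) - d(\lambda^0)\right] \leq \alpha\left[F^* - d(\lambda^0)\right]$. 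Carried through, your route yields $F(\hat{\bo u}^k) - F^* \leq \frac{\underline{L}\|\lambda^0\|^2}{k+1} + \frac{F^*-d(\lambda^0)}{k+1} + \frac{4}{3}\ei$, which is strictly weaker than \eqref{eq_primal_grad}: with the paper's recommended choice $\lambda^0 = 0$ the stated bound has \emph{no} $1/k$ term at all (indeed the paper remarks that $F(\hat{\bo u}^k) \leq F^*$ when $\ei = 0$), whereas $F^* - d(0)$ is the full initial duality gap and can be arbitrarily large.

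The missing idea is a cancellation that forbids applying weak duality iterate-by-iterate. The paper takes \eqref{inequalities_gradient} at $\lambda = 0$, uses \eqref{ineq_approx} to absorb the quadratic term, and then substitutes the identity $-\iprod{\bar{\nabla} d(\lambda^j), \lambda^j} = F(\bar{\bo u}^j) - \bar{d}(\lambda^j)$, so that $\bar{d}(\lambda^j)$ appears on \emph{both} sides and cancels exactly, leaving
\begin{equation*}
\|\lambda^{j+1}\|^2 + 2\alpha^j F(\bar{\bo u}^j) \leq \|\lambda^j\|^2 + 2\alpha^j d(\lambda^{j+1}) + 2\alpha^j \ei .
\end{equation*}
Summing, using convexity of $F$ and concavity of $d$, and only at the very end invoking $d(\hat{\lambda}^k) \leq F^*$ gives \eqref{eq_primal_grad} with the clean constants. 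So your instinct about which test vector to use ($\lambda = 0$) and about the $\ei$ terms collapsing to a single $\ei$ is right, but without this exact cancellation the duality-gap leakage cannot be avoided and the claimed intermediate inequality remains unproven.
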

\proof In order to prove the left-hand side inequality we can
write:
\begin{align*}
F^*&=d(\lambda^*)=\min_{\mathbf{u}\in\mathbf{U}}
F(\mathbf{u})+\langle \lambda^*,h(\mathbf{u})\rangle \leq
F(\mathbf{ \hat u}^k)+\langle \lambda^*,h(\mathbf{ \hat
u}^k)\rangle \\
&\leq F(\mathbf{ \hat u}^k)+ \iprod{\lambda^*,[h(\mathbf{ \hat
u}^k)]_+}  \leq F(\mathbf{ \hat u}^k) + \|\lambda^*\|\|[h(\mathbf{
\hat u}^k)]_+\|\\
&= F(\mathbf{ \hat u}^k) +
\|\lambda^*-\lambda^0+\lambda^0\|\|[h(\mathbf{ \hat u}^k)]_+\|
\nonumber \leq F(\mathbf{ \hat u}^k) +
(R_{\mathrm{d}}+\|\lambda^0\|)\|[h(\mathbf{ \hat u}^k)]_+\|,
\end{align*}
which together with \eqref{eq_fesa_grad} lead to the result.

\noindent Now, we prove the right-hand side inequality. Taking
$\lambda=0$ in the first inequality of \eqref{inequalities_gradient}
we~get:
\begin{align*}
\|\lambda^{j+1}\|^2-2\alpha^j\langle \bar{\nabla}
d(\lambda^j),\lambda^j\rangle &\leq \|\lambda^j\|^2
+2\alpha^j\left[\langle \bar{\nabla}
d(\lambda^j),\lambda^{j+1}\!\!-\!\!\lambda^j\rangle\!-
\!L_{\text{d}}\|\lambda^{j+1}\!\!-\!\!\lambda^j\|^2\right]\\
&\overset{\eqref{ineq_approx}}{\leq}
\|\lambda^j\|^2+2\alpha^j\left[d(\lambda^{j+1})-\bar{d}(\lambda^j)+\ei\right].
\end{align*}
Taking into account that $\bar{\nabla} d(\lambda^j)=h(\bo{\bar
u}^j)$ and using the definition of $\bar{d}(\lambda^j)$ we have:
$-\langle \bar{\nabla} d(\lambda^j),\lambda^j\rangle=F(\bo{\bar
u}^j)-\bar{d}(\lambda^j)$.  Using this relation in the previous
inequality we get:
\begin{align*}
\|\lambda^{j+1}\|^2 +2\alpha^j \left[F(\bo{\bar
u}^j)-\bar{d}(\lambda^j)\right]\leq \|\lambda^j\|^2+2 \alpha^j
\left[d(\lambda^{j+1})-\bar{d}(\lambda^j)\right]+2\alpha^j\ei.
\end{align*}
Summing up these inequalities for $j=0, \dots, k$ and taking into
account that $F$ is convex and $d$ concave, we obtain the following
inequality:
\begin{equation*}
2S^k \left[F(\mathbf{ \hat u}^k)-d(\hat{\lambda}^k)\right]\leq
\|\lambda^0\|^2+2S^k\ei.
\end{equation*}
Dividing both sides of the previous inequality by $S^k$ and using
that $S^k \geq \frac{k+1}{2\underline{L}}$ and $d(\hat{\lambda}^k)
\leq F^*$, we obtain \eqref{eq_primal_grad}. \qed

Now, for a desired accuracy $\eo$ for solving problem
\eqref{original_primal}, we are interested in finding the number of
outer iterations $k_{\text{out}}$ and a relation between $\eo$ and
$\ei$ such that primal feasibility violation and suboptimality
satisfy  \eqref{condition_outer} and, moreover,  the  dual
suboptimality will be also less than $\mathcal{O}(\eo)$. For
simplicity, we consider the initial iterate $\lambda^0=0$ and thus
$R_{\mathrm{d}}=\|\lambda^*\|$. Further, we consider a constant step
size $\alpha^j=\frac{1}{2L_{\mathrm{d}}}$. Using Theorems
\ref{dual_optimality_grad}, \ref{theorem_fesa_grad} and
\ref{theorem_primal_grad} we can take:
\begin{equation*}
k_{\text{out}}=\left\lfloor\frac{4L_{\mathrm{d}}R_{\mathrm{d}}^2}{\eo}\right\rfloor~~\text{and}~~\ei=\eo,
\end{equation*}
for which we obtain the following estimates for primal feasibility
violation and    suboptimality:
\begin{align*}
&\bo{\hat{u}}^{k_{\text{out}}} \in \bo{U},\;
\|[h(\bo{\hat{u}}^{k_{\text{out}}})]_+\| \!\leq\!
\frac{2}{R_{\mathrm{d}}}\eo~,\\
-2\eo \leq F&(\bo{\hat{u}}^{k_{\text{out}}})-F^* \leq
\eo~~\mathrm{and}~~ F^*- d(\hat{\lambda}^{k_{\text{out}}}) \!\leq
\frac{5}{4}\eo.
\end{align*}
From the previous discussion it follows that in the algorithm
\textbf{(IDG)} the inner problems \eqref{ul} need to be solved
with about the same  accuracy as the desired accuracy  of the
outer problem, i.e. $\ei=\eo$ in the stopping criterion
\eqref{inner_crit}.


\subsection{Inexact dual fast gradient method for solving the
 dual (outer) problem} \label{sec_dfg}

In this section we discuss an inexact dual fast gradient scheme for
updating the dual variable $\lambda$. A similar algorithm was
proposed by Nesterov in \cite{Nes:04a} and applied further in
\cite{NecSuy:08} for solving dual problems with exact gradient
information. An inexact version of the algorithm can be also found
in \cite{DevGli:11}. The scheme defines two sequences $\left({\hat
\lambda}^k,\lambda^k\right)_{k\geq 0}$ for the dual variables:
\begin{center}
\framebox{
\parbox{10.5cm}{
\begin{center}
\textbf{ Algorithm {\bf (IDFG)}$(\lambda^0)$ }
\end{center}
{Given $\lambda^0 \in \rset^p_+$, for $k\geq 0$ compute:}
\begin{enumerate}
\item{$\mathbf{ \bar u}^k \approx \arg \min\limits_{\mathbf{u} \in
\bo{U}} \mathcal{L}(\mathbf{u},\lambda^k)$ { such that
\eqref{inner_crit} holds}}

\item ${\hat
\lambda}^k=\left[\lambda^k+\frac{1}{2L_{\text{d}}}{\bar \nabla}
d(\lambda^k)\right]_+$

\item $\lambda^{k+1}=\frac{k+1}{k+3}{\hat
\lambda}^k+\frac{2}{k+3}\left[\lambda^0+\frac{1}{2L_{\text{d}}}\sum_{s=0}^k
\frac{s+1}{2} {\bar \nabla} d(\lambda^s)\right]_+$,
\end{enumerate}
}}
\end{center}

\vspace{0.1cm} where we recall that ${\bar \nabla}
d(\lambda^k)=h(\bo{\bar u}^k)$. Based on Theorem 4 in
\cite{DevGli:11}, which is an extension of the results in
\cite{NecSuy:08,Nes:04a} to the inexact case, we have the
following result which will help us to establish upper bounds on
primal and dual suboptimality and feasibility violation for our
method.
\begin{lemma}\cite[Theorem 4]{DevGli:11}
\label{prop_rec} If Assumptions \ref{as_strong_lipschitz} and
\ref{as_Slater} hold and the sequences $\left(\mathbf{ \bar
u}^k,{\hat \lambda}^k,\lambda^k\right)_{k\geq 0}$ are generated by
algorithm {\bf (IDFG)}, then for all $k \geq 0$ we have:
\begin{align}
\label{eq_rec} \frac{(k+1)(k+2)}{4} d({\hat \lambda}^k) &\geq
\max_{\lambda \geq 0} - L_{\text{d}} \|\lambda-\lambda^0\|^2
+\sum_{s=0}^k \frac{s+1}{2}\left[\bar{d}(\lambda^s)+\langle {\bar
\nabla} d(\lambda^s),\lambda-\lambda^s
\rangle\right] \nonumber \\
&~~~~-\frac{(k+1)(k+2)(k+3)}{12}\epsilon_{\text{in}} ~~~\forall
\lambda \in \rset^p_+.
\end{align}
\end{lemma}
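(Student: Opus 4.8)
The plan is to prove \eqref{eq_rec} by induction on $k$, using the weighted aggregated-model (estimate-sequence) argument of Nesterov adapted to the inexact oracle supplied by \eqref{ineq_approx}. First I would fix the notation $\alpha_s=\frac{s+1}{2}$, $A_k=\sum_{s=0}^k\alpha_s=\frac{(k+1)(k+2)}{4}$ and $E_k=\frac{(k+1)(k+2)(k+3)}{12}$, and record the elementary identities $E_k=\sum_{s=0}^kA_s$, $E_k-E_{k-1}=A_k$, together with $\frac{A_k}{A_{k+1}}=\frac{k+1}{k+3}$ and $\frac{\alpha_{k+1}}{A_{k+1}}=\frac{2}{k+3}$, so that step~3 of \textbf{(IDFG)} reads $\lambda^{k+1}=\frac{A_k}{A_{k+1}}{\hat\lambda}^k+\frac{\alpha_{k+1}}{A_{k+1}}v^k$ with $v^k:=\big[\lambda^0+\frac{1}{2L_{\text{d}}}\sum_{s=0}^k\alpha_s{\bar\nabla}d(\lambda^s)\big]_+$. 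Writing the linear model $\ell_s(\lambda)=\bar d(\lambda^s)+\langle{\bar\nabla}d(\lambda^s),\lambda-\lambda^s\rangle$ and the aggregated function $\Psi_k(\lambda)=-L_{\text{d}}\|\lambda-\lambda^0\|^2+\sum_{s=0}^k\alpha_s\ell_s(\lambda)$, one checks that $v^k=\arg\max_{\lambda\geq0}\Psi_k(\lambda)$ (indeed $\Psi_k(\lambda)=-L_{\text{d}}\|\lambda-\lambda^*_k\|^2+\text{const}$ with $\lambda^*_k=\lambda^0+\frac{1}{2L_{\text{d}}}\sum_s\alpha_s{\bar\nabla}d(\lambda^s)$, whose projection on $\rset^p_+$ is $v^k$). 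The target \eqref{eq_rec} is then precisely $A_kd({\hat\lambda}^k)\geq\Psi_k(v^k)-E_k\ei$.

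I would rely on two ingredients. The first is a strong-concavity bound: since $\Psi_k$ differs from the single quadratic $-L_{\text{d}}\|\lambda-\lambda^0\|^2$ only by a linear term, it is $2L_{\text{d}}$-strongly concave, so the constrained optimality of $v^k$ (the variational inequality $\langle\nabla\Psi_k(v^k),\lambda-v^k\rangle\leq0$ for all $\lambda\geq0$, as in \eqref{opt_cond_iter_grad}) yields
\begin{equation*}
\Psi_k(\lambda)\leq\Psi_k(v^k)-L_{\text{d}}\|\lambda-v^k\|^2\qquad\forall\lambda\geq0.
\end{equation*}
The second is an inexact projected-gradient ascent lemma: by step~2, ${\hat\lambda}^{k+1}=\arg\max_{\mu\geq0}[\,\ell_{k+1}(\mu)-L_{\text{d}}\|\mu-\lambda^{k+1}\|^2\,]$, and the lower quadratic half of \eqref{ineq_approx}, evaluated at $\mu={\hat\lambda}^{k+1}$ with base point $\lambda^{k+1}$, gives
\begin{equation*}
d({\hat\lambda}^{k+1})\geq\max_{\mu\geq0}\big[\,\ell_{k+1}(\mu)-L_{\text{d}}\|\mu-\lambda^{k+1}\|^2\,\big]-\ei.
\end{equation*}

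For the inductive step I would assume $A_kd({\hat\lambda}^k)\geq\Psi_k(v^k)-E_k\ei$ and estimate $\Psi_{k+1}(\lambda)=\Psi_k(\lambda)+\alpha_{k+1}\ell_{k+1}(\lambda)$ for each $\lambda\geq0$. Substituting the strong-concavity bound for $\Psi_k(\lambda)$, then the induction hypothesis for $\Psi_k(v^k)$, and finally the linear upper model $d({\hat\lambda}^k)\leq\ell_{k+1}({\hat\lambda}^k)$ (the first inequality in \eqref{ineq_approx}), I obtain
\begin{equation*}
\Psi_{k+1}(\lambda)\leq A_k\ell_{k+1}({\hat\lambda}^k)+\alpha_{k+1}\ell_{k+1}(\lambda)-L_{\text{d}}\|\lambda-v^k\|^2+E_k\ei.
\end{equation*}
By linearity of $\ell_{k+1}$ the first two terms collapse to $A_{k+1}\ell_{k+1}(\mu(\lambda))$ with $\mu(\lambda)=\frac{A_k{\hat\lambda}^k+\alpha_{k+1}\lambda}{A_{k+1}}\geq0$, and the convex-combination identity for $\lambda^{k+1}$ gives $\mu(\lambda)-\lambda^{k+1}=\frac{\alpha_{k+1}}{A_{k+1}}(\lambda-v^k)$. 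Using $\alpha_{k+1}^2\leq A_{k+1}$ (equivalently $k+2\leq k+3$) I can trade $L_{\text{d}}\|\lambda-v^k\|^2$ for the larger $A_{k+1}L_{\text{d}}\|\mu(\lambda)-\lambda^{k+1}\|^2$, so that the right-hand side is bounded by $A_{k+1}\big[\ell_{k+1}(\mu(\lambda))-L_{\text{d}}\|\mu(\lambda)-\lambda^{k+1}\|^2\big]+E_k\ei$. The ascent lemma then bounds the bracket by $d({\hat\lambda}^{k+1})+\ei$, whence $\Psi_{k+1}(\lambda)\leq A_{k+1}d({\hat\lambda}^{k+1})+(A_{k+1}+E_k)\ei$; since $A_{k+1}+E_k=E_{k+1}$, taking $\max_{\lambda\geq0}$ closes the induction. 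The base case $k=0$ follows by a direct application of the ascent lemma at $\lambda^0$, comparing the two quadratic penalties $-L_{\text{d}}\|\cdot\|^2$ and $-2L_{\text{d}}\|\cdot\|^2$.

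The main obstacle I anticipate is the bookkeeping forced by the projection $[\cdot]_+$ onto $\rset^p_+$: every ``maximizer'' statement (that $v^k$ maximizes $\Psi_k$, that ${\hat\lambda}^{k+1}$ maximizes the quadratic model, and that $\mu(\lambda)$ stays in $\rset^p_+$) must be justified through the corresponding variational inequalities rather than by merely setting gradients to zero, exactly as in \eqref{opt_cond_iter_grad}. Once these constrained optimality conditions are in place, the remaining manipulations reduce to the routine algebraic identities among $A_k$, $\alpha_k$ and $E_k$ recorded at the outset.
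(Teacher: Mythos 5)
Your proof is correct, but note that the paper itself offers no proof to compare against: Lemma \ref{prop_rec} is imported wholesale as \cite[Theorem 4]{DevGli:11}. What you have produced is a self-contained reconstruction of the estimate-sequence induction behind that cited result, specialized to the present dual (maximization) setting with projections onto $\rset^p_+$. I verified the steps and they all hold: completing the square shows $v^k$ is the constrained maximizer of $\Psi_k$; since $\Psi_k$ is an exact quadratic with Hessian $-2L_{\text{d}}I$, its Taylor expansion plus the variational inequality at $v^k$ gives $\Psi_k(\lambda)\leq\Psi_k(v^k)-L_{\text{d}}\|\lambda-v^k\|^2$; step 2 of \textbf{(IDFG)} makes ${\hat\lambda}^{k+1}$ the exact maximizer of $\ell_{k+1}(\mu)-L_{\text{d}}\|\mu-\lambda^{k+1}\|^2$ over $\rset^p_+$, so the lower half of \eqref{ineq_approx} (applicable because $\lambda^{k+1}\in\rset^p_+$, being a convex combination of ${\hat\lambda}^k$ and $v^k$) yields your ascent lemma; the identities $\frac{A_k}{A_{k+1}}=\frac{k+1}{k+3}$ and $\frac{\alpha_{k+1}}{A_{k+1}}=\frac{2}{k+3}$ identify step 3 of the algorithm with your convex-combination form; the quadratic trade is exactly $\alpha_{k+1}^2\leq A_{k+1}$, i.e.\ $(k+2)^2\leq(k+2)(k+3)$; and the error bookkeeping closes since $E_{k+1}=E_k+A_{k+1}$ with $E_k=\sum_{s=0}^{k}A_s=\frac{(k+1)(k+2)(k+3)}{12}$, which is precisely how the cubic-in-$k$ accumulated error in \eqref{eq_rec} arises (one $\ei$ per iteration, weighted by $A_{k+1}$). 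Your route buys self-containedness and makes transparent why \textbf{(IDFG)} accumulates errors (each induction step contributes $A_{k+1}\ei$), in contrast with the telescoping argument used for \textbf{(IDG)}; the paper's citation buys only brevity.
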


The following theorem provides an estimate on the dual suboptimality
for algorithm {\bf (IDFG)}:
\begin{theorem}
\label{theorem_dual_optim} Let Assumptions \ref{as_strong_lipschitz}
and \ref{as_Slater} hold and the sequences $\left(\mathbf{ \bar
u}^k,{\hat \lambda}^k,\lambda^k\right)_{k\geq 0}$ be generated by
algorithm {\bf (IDFG)}. Then, an estimate on dual suboptimality for
\eqref{dual_pr} is given by:
\begin{equation}
\label{bound_dual_optim} F^*-d({\hat \lambda}^k)\leq \frac{4
L_{\text{d}}R_{\mathrm{d}}^2}{(k+1)^2}+(k+1)\ei,
\end{equation}
with $R_{\mathrm{d}}$ defined as in Theorem
\ref{dual_optimality_grad}.
\end{theorem}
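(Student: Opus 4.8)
The plan is to derive \eqref{bound_dual_optim} directly from the recursion in Lemma~\ref{prop_rec}, whose right-hand side already encodes all the estimate-sequence bookkeeping of the fast gradient scheme. The task reduces to turning that right-hand side into a usable lower bound on $\frac{(k+1)(k+2)}{4}\,d(\hat{\lambda}^k)$. The crucial observation is that the left half of \eqref{ineq_approx} says precisely that the inexact linear model overestimates the true dual value: taking $\mu=\lambda$ and base point $\lambda^s$ there gives $\bar{d}(\lambda^s)+\langle\bar{\nabla}d(\lambda^s),\lambda-\lambda^s\rangle\geq d(\lambda)$ for every $\lambda\in\rset^p_+$. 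Applying this termwise to the weighted sum in \eqref{eq_rec}, each summand is at least $\frac{s+1}{2}\,d(\lambda)$, so the sum dominates $\bigl(\sum_{s=0}^k\frac{s+1}{2}\bigr)d(\lambda)=\frac{(k+1)(k+2)}{4}\,d(\lambda)$, using the arithmetic-series identity $\sum_{s=0}^k(s+1)=\frac{(k+1)(k+2)}{2}$.

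Next I would lower-bound the maximum over $\lambda\geq 0$ in \eqref{eq_rec} by simply evaluating the resulting expression $-L_{\text{d}}\|\lambda-\lambda^0\|^2+\frac{(k+1)(k+2)}{4}\,d(\lambda)$ at the single point $\lambda=\lambda^*$. By strong duality (Assumption~\ref{as_Slater}) we have $d(\lambda^*)=F^*$, and $\|\lambda^*-\lambda^0\|=R_{\mathrm{d}}$ by definition, so this evaluation produces the lower bound $-L_{\text{d}}R_{\mathrm{d}}^2+\frac{(k+1)(k+2)}{4}F^*$. Feeding this back into \eqref{eq_rec} and rearranging yields $\frac{(k+1)(k+2)}{4}\,[F^*-d(\hat{\lambda}^k)]\leq L_{\text{d}}R_{\mathrm{d}}^2+\frac{(k+1)(k+2)(k+3)}{12}\,\ei$.

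Finally, dividing through by $\frac{(k+1)(k+2)}{4}$ gives $F^*-d(\hat{\lambda}^k)\leq\frac{4L_{\text{d}}R_{\mathrm{d}}^2}{(k+1)(k+2)}+\frac{k+3}{3}\,\ei$, after which I would apply two deliberately loose relaxations to match the stated form: $(k+2)\geq(k+1)$ weakens the first term to $\frac{4L_{\text{d}}R_{\mathrm{d}}^2}{(k+1)^2}$, while $\frac{k+3}{3}\leq k+1$ (equivalent to $0\leq 2k$, valid for all $k\geq 0$) weakens the error term to $(k+1)\ei$, producing exactly \eqref{bound_dual_optim}.

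I do not expect a genuine obstacle, since Lemma~\ref{prop_rec} performs the heavy lifting. The only two points needing care are recognizing that it is the \emph{left} inequality of \eqref{ineq_approx} that supplies the overestimation property required to extract $d(\lambda)$ from the sum, and accepting the mildly wasteful final relaxations that trade the sharper coefficient $\frac{k+3}{3}$ for the cleaner $(k+1)$ on the inner-accuracy term.
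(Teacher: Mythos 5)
Your proposal is correct and follows essentially the same route as the paper's proof: both use the left inequality of \eqref{ineq_approx} to replace each term $\bar{d}(\lambda^s)+\langle\bar{\nabla}d(\lambda^s),\lambda-\lambda^s\rangle$ by $d(\lambda)$, evaluate the maximum in \eqref{eq_rec} at $\lambda=\lambda^*$ with $d(\lambda^*)=F^*$, and then apply the relaxations $(k+1)^2\leq(k+1)(k+2)$ and $(k+3)/3\leq k+1$. All algebraic steps, including the series identity $\sum_{s=0}^k\frac{s+1}{2}=\frac{(k+1)(k+2)}{4}$, check out.
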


\proof Using the first inequality from \eqref{ineq_approx} in
\eqref{eq_rec} we get:
\begin{align*}
\frac{(k+1)(k+2)}{4} d({\hat \lambda}^k) \geq &- L_{\text{d}}
\|\lambda^0-\lambda^*\|^2+\sum_{s=0}^k\frac{s+1}{2}d(\lambda^*)-\frac{(k+1)(k+2)(k+3)}{12}\ei.
\end{align*}
Dividing now both sides  by $\frac{(k+1)(k+2)}{4}$, rearranging the
terms and taking into account that $d(\lambda^*)=F^*$, $(k+1)^2 \leq
(k+1)(k+2)$ and $(k+3)/3 \leq k+1$ we obtain
\eqref{bound_dual_optim}. \qed

\noindent We can observe that the first term in the estimate
\eqref{bound_dual_optim} represents the standard rate of convergence
of the fast gradient method  for the class of smooth functions
\cite{Nes:04}. Also, the second term $(k+1)\ei$
 is the error induced by the fact that the gradient is
computed only approximately and shows that algorithm {\bf (IDFG)}
accumulates the errors.

\noindent Further, we are interested now in finding estimates on
primal feasibility violation and primal suboptimality for our
original problem \eqref{original_primal}. For this purpose we define
the following average  sequence for the primal variables:
\begin{equation}
\label{primal_point} \mathbf{\hat
u}^k=\sum_{s=0}^k\frac{2(s+1)}{(k+1)(k+2)}\mathbf{\bar u}^s.
\end{equation}

The next result gives an estimate on primal feasibility violation.
\begin{theorem}
\label{theorem_primal_fesa} Under the assumptions of Theorem
\ref{theorem_dual_optim} and $\mathbf{\hat u}^k$ generated by
\eqref{primal_point}, an estimate on primal feasibility violation
for original problem \eqref{original_primal} is given by:
\begin{equation}
\label{ineq_bound_fesa_fg} \|[ h(\bo{\hat u}^k)]_+\| \leq
v(k,\epsilon_{\textrm{in}}),
\end{equation}
where $v(k,\ei)=\frac{16L_{\text{d}}
R_{\mathrm{d}}}{(k+1)^2}+\frac{8L_{\text{d}}
\|\lambda^0\|}{(k+1)^2}+4\sqrt{\frac{L_{\mathrm{d}}}{k+1}\ei}$.
\end{theorem}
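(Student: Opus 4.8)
The plan is to mirror the proof of Theorem \ref{theorem_fesa_grad}: bound the feasibility violation $\|[h(\bo{\hat u}^k)]_+\|$ by the norm of an appropriate dual iterate, and then control that norm by a quadratic inequality built from Lemma \ref{prop_rec}. The relevant iterate is the vector inside the projection in step 3 of {\bf (IDFG)}; denote $\Lambda^k=\left[\lambda^0+\frac{1}{2L_{\text{d}}}\sum_{s=0}^k\frac{s+1}{2}{\bar \nabla} d(\lambda^s)\right]_+$.

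First I would translate feasibility into a bound on $\|\Lambda^k\|$. Since the weights in \eqref{primal_point} sum to one, convexity of each component of $h$ gives $h(\bo{\hat u}^k)\leq\frac{2}{(k+1)(k+2)}\sum_{s=0}^k(s+1)h(\bo{\bar u}^s)$; recalling ${\bar \nabla} d(\lambda^s)=h(\bo{\bar u}^s)$ together with the elementary fact $\Lambda^k\geq\lambda^0+\frac{1}{2L_{\text{d}}}\sum_{s=0}^k\frac{s+1}{2}{\bar \nabla} d(\lambda^s)$ (the positive part dominates its argument) and $\lambda^0\geq0$ yields $\sum_{s=0}^k(s+1){\bar \nabla} d(\lambda^s)\leq 4L_{\text{d}}\Lambda^k$ componentwise. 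Hence $[h(\bo{\hat u}^k)]_+\leq\frac{8L_{\text{d}}}{(k+1)(k+2)}\Lambda^k$ componentwise, and since $\Lambda^k\geq0$ we obtain $\|[h(\bo{\hat u}^k)]_+\|\leq\frac{8L_{\text{d}}}{(k+1)(k+2)}\|\Lambda^k\|$.

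The crux is to bound $\|\Lambda^k\|$, and this is where I expect the main obstacle. The key observation is that $\Lambda^k$ is exactly the maximizer over $\rset^p_+$ of the concave quadratic $\psi(\lambda)=-L_{\text{d}}\|\lambda-\lambda^0\|^2+\sum_{s=0}^k\frac{s+1}{2}\left[\bar{d}(\lambda^s)+\langle{\bar \nabla} d(\lambda^s),\lambda-\lambda^s\rangle\right]$, whose maximal value is the quantity maximized on the right-hand side of \eqref{eq_rec}. I would then sandwich $\psi(\Lambda^k)$. From above, Lemma \ref{prop_rec} with $d(\hat\lambda^k)\leq F^*$ gives $\psi(\Lambda^k)\leq\frac{(k+1)(k+2)}{4}F^*+\frac{(k+1)(k+2)(k+3)}{12}\ei$. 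From below, $\psi$ is $2L_{\text{d}}$-strongly concave, so its maximizer satisfies $\psi(\Lambda^k)\geq\psi(\lambda^*)+L_{\text{d}}\|\lambda^*-\Lambda^k\|^2$, while evaluating the first inequality of \eqref{ineq_approx} at $\mu=\lambda^*$ termwise gives $\psi(\lambda^*)\geq-L_{\text{d}}R_{\mathrm{d}}^2+\frac{(k+1)(k+2)}{4}F^*$.

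Combining the two estimates, the terms $\frac{(k+1)(k+2)}{4}F^*$ cancel and I obtain the quadratic inequality $\|\lambda^*-\Lambda^k\|^2\leq R_{\mathrm{d}}^2+\frac{(k+1)(k+2)(k+3)}{12L_{\text{d}}}\ei$. Using $\sqrt{\zeta_1+\zeta_2}\leq\sqrt{\zeta_1}+\sqrt{\zeta_2}$, the triangle inequality and $\|\lambda^*\|\leq R_{\mathrm{d}}+\|\lambda^0\|$ then yields $\|\Lambda^k\|\leq 2R_{\mathrm{d}}+\|\lambda^0\|+\sqrt{\frac{(k+1)(k+2)(k+3)}{12L_{\text{d}}}\ei}$. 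Substituting this into the feasibility bound, then bounding $(k+1)(k+2)\geq(k+1)^2$ for the first two terms while for the $\ei$-term keeping the factor $(k+1)(k+2)$ and using $\frac{k+3}{k+2}\leq3$, gives precisely the three terms of $v(k,\ei)$ in \eqref{ineq_bound_fesa_fg}. The genuinely nontrivial step is recognizing $\Lambda^k$ as the argmax of $\psi$ and invoking strong concavity to convert the value bound of Lemma \ref{prop_rec} into the distance bound on $\|\lambda^*-\Lambda^k\|$; the remainder is bookkeeping with constants.
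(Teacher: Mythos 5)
Your proof is correct, and it follows a genuinely different route from the paper's. The paper never works with the auxiliary iterate $\Lambda^k$: starting from \eqref{eq_rec} it uses convexity of $F$ and $h$ to obtain \eqref{ineq_fesa0}, bounds $d(\hat{\lambda}^k)-F(\hat{\bo{u}}^k)$ by $\langle \lambda^*,[h(\hat{\bo{u}}^k)]_+\rangle$ via weak duality, then lower-bounds the max in \eqref{ineq_fesa0} by evaluating it at the specific test point $\lambda=\frac{(k+1)^2}{8L_{\mathrm{d}}}[h(\hat{\bo{u}}^k)]_+$, and finally solves a second-order inequality in $\alpha=\|[h(\hat{\bo{u}}^k)]_+\|$ itself. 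You instead mirror the structure of the \textbf{(IDG)} feasibility proof (Theorem \ref{theorem_fesa_grad}): bound the violation componentwise by $\frac{8L_{\mathrm{d}}}{(k+1)(k+2)}\Lambda^k$, recognize $\Lambda^k$ as the exact maximizer of the model $\psi$ appearing in \eqref{eq_rec} (it is the projection of the unconstrained maximizer onto $\rset^p_+$), and convert the value bound of Lemma \ref{prop_rec} into a distance bound $\|\lambda^*-\Lambda^k\|^2\leq R_{\mathrm{d}}^2+\frac{(k+1)(k+2)(k+3)}{12L_{\mathrm{d}}}\ei$ via $2L_{\mathrm{d}}$-strong concavity together with the termwise lower bound $\psi(\lambda^*)\geq -L_{\mathrm{d}}R_{\mathrm{d}}^2+\frac{(k+1)(k+2)}{4}F^*$ from \eqref{ineq_approx}; all your intermediate steps check out, and the bookkeeping (including $\frac{k+3}{k+2}\leq 3$) reproduces exactly the three terms of $v(k,\ei)$. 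What each approach buys: the paper's argument works purely at the level of values of the max and needs no closed form for the maximizer, so it transfers to settings where the model's argmax is not explicit; your argument exploits the specific structure of \textbf{(IDFG)}, yields as a byproduct an explicit boundedness estimate on the auxiliary dual sequence $\|\Lambda^k\|$ (the analogue of the bound on $\|\lambda^{k+1}\|$ in the \textbf{(IDG)} analysis), and structurally unifies the feasibility proofs of the two algorithms.
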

\proof Using \eqref{eq_rec}, the convexity of $F$ and $h$ and taking
into account that $(k+3)/3 \leq k+1$, we can write for any $\lambda
\in \rset_+^p$:
\begin{equation}
\label{ineq_fesa0} \max_{\lambda \geq
0}-\frac{4L_{\text{d}}}{(k+1)^2}\|\lambda-\lambda^0\|^2+\langle
\lambda, h(\mathbf{ \hat u}^k) \rangle \leq (k+1)\ei + d({\hat
\lambda}^k) - F(\mathbf{\hat u}^k).
\end{equation}

For the second term of the right-hand side we have:
\begin{align}
\label{ineq_fesa1}  d({\hat \lambda}^k) -  F(\mathbf{ \hat u}^k)
&\leq d(\lambda^*)-F(\mathbf{ \hat u}^k) = \min_{\mathbf{u} \in
\bo{U}}F(\mathbf{u}) +\langle \lambda^*, h(\mathbf{u}) \rangle
-F(\mathbf{\hat u}^k) \nonumber\\
&\leq F(\mathbf{ \hat u}^k) + \langle \lambda^*, h(\mathbf{ \hat
u}^k)\rangle - F(\mathbf{ \hat u}^k)=\langle \lambda^*, h(\mathbf{
\hat u}^k) \rangle  \leq  \langle\lambda^*,[h(\mathbf{ \hat u}^k)
]_+\rangle,
\end{align}
where in the last inequality we used that $\lambda^* \geq 0$. By
evaluating the left-hand side term in \eqref{ineq_fesa0} at
$\lambda=\frac{(k+1)^2}{8L_{\mathrm{d}}}\left[ h(\mathbf{ \hat
u}^k) \right]_+$ and taking into account that $\langle
[h(\!\mathbf{ \hat u}^k\!) ]_+,h(\!\mathbf{ \hat u}^k\!)
-[h(\!\mathbf{ \hat u}^k\!) ]_+\rangle=0$ we obtain the following
inequality:
\begin{align}
\label{ineq_fesa2} \max_{\lambda \geq
0}-\frac{4L_{\text{d}}}{(k+1)^2}\|\lambda-\lambda^0\|^2+\langle
\lambda, h(\mathbf{ \hat u}^k) \rangle \geq
&\frac{(k+1)^2}{16L_{\text{d}}}\|[h(\mathbf{ \hat u}^k) ]_+\|^2
\nonumber\\
&-\frac{4L_{\mathrm{d}}\|\lambda^0\|^2}{(k+1)^2}+\langle
\lambda^0,[h(\mathbf{ \hat u}^k) ]_+\rangle.
\end{align}
Combining now \eqref{ineq_fesa1} and \eqref{ineq_fesa2} with
\eqref{ineq_fesa0}, using the Cauchy-Schwartz inequality and
introducing the notation $\alpha=\|\left[h(\mathbf{ \hat
u}^k)\right]_+\|$, we obtain the following second order inequality
in $\alpha$:
\begin{equation*}
\frac{(k+1)^2}{16L_{\text{d}}}\alpha^2-\|\lambda^*-\lambda^0\|\alpha-(k+1)\ei-\frac{4L_{\mathrm{d}}\|\lambda^0\|^2}{(k+1)^2}
\leq 0.
\end{equation*}
Therefore, $\alpha$ must be less than the largest root of the
second-order equation, from which together with the definition of
$R_{\mathrm{d}}$ and the identity $\sqrt{\zeta_1+\zeta_2} \leq
\sqrt{\zeta_1}+\sqrt{\zeta_2}$, we get the result. \qed

\begin{theorem}
\label{theorem_primal_optim} Assume that the conditions in Theorem
\ref{theorem_primal_fesa} are satisfied and let $\mathbf{\hat u}^k$
be given by \eqref{primal_point}. Then, the following estimate on
primal suboptimality  for  problem \eqref{original_primal} can be
derived:
\begin{equation}
\label{bound_primal_optim}
-\left(R_{\mathrm{d}}+\|\lambda^0\|\right)v(k,\ei)\leq F(\mathbf{
\hat u}^k) - F^* \leq \frac{4
L_{\text{d}}\|\lambda^0\|^2}{(k+1)^2}+(k+1  )\ei.
\end{equation}
\end{theorem}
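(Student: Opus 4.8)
The plan is to prove the two inequalities separately, reusing the feasibility bound from Theorem \ref{theorem_primal_fesa} for the lower bound and the recursion of Lemma \ref{prop_rec} for the upper bound, in close analogy with the gradient case of Theorem \ref{theorem_primal_grad}.

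For the left-hand inequality I would start from strong duality, $F^*=d(\lambda^*)=\min_{\mathbf{u}\in\bo{U}}F(\mathbf{u})+\langle\lambda^*,h(\mathbf{u})\rangle$, and bound it above by evaluating the Lagrangian at $\mathbf{\hat u}^k$, which gives $F^*\leq F(\mathbf{\hat u}^k)+\langle\lambda^*,h(\mathbf{\hat u}^k)\rangle$. Since $\lambda^*\geq 0$, I can pass to the positive part and apply the Cauchy-Schwartz inequality to obtain $F^*\leq F(\mathbf{\hat u}^k)+\|\lambda^*\|\,\|[h(\mathbf{\hat u}^k)]_+\|$, and the triangle inequality on $\lambda^*=(\lambda^*-\lambda^0)+\lambda^0$ yields $\|\lambda^*\|\leq R_{\mathrm{d}}+\|\lambda^0\|$. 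Substituting the feasibility estimate \eqref{ineq_bound_fesa_fg} for $\|[h(\mathbf{\hat u}^k)]_+\|$ and rearranging produces the left inequality. This step is essentially identical to the one already carried out for algorithm {\bf (IDG)} in Theorem \ref{theorem_primal_grad}.

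For the right-hand inequality I would specialize the variable inside the maximum in \eqref{eq_rec} to $\lambda=0$, which is admissible since $0\in\rset^p_+$ and the maximum only lower-bounds any particular choice. The crucial simplification is that, using $\bar{d}(\lambda^s)=F(\mathbf{\bar u}^s)+\langle\lambda^s,h(\mathbf{\bar u}^s)\rangle$ together with ${\bar \nabla} d(\lambda^s)=h(\mathbf{\bar u}^s)$, each summand collapses to $\bar{d}(\lambda^s)+\langle{\bar \nabla} d(\lambda^s),-\lambda^s\rangle=F(\mathbf{\bar u}^s)$. The weighted sum $\sum_{s=0}^k\frac{s+1}{2}F(\mathbf{\bar u}^s)$ then bounds $\frac{(k+1)(k+2)}{4}F(\mathbf{\hat u}^k)$ from below, by convexity of $F$ and the definition \eqref{primal_point} of $\mathbf{\hat u}^k$, whose weights sum to one. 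Rearranging the resulting inequality gives
\[
\frac{(k+1)(k+2)}{4}\left[F(\mathbf{\hat u}^k)-d({\hat \lambda}^k)\right]\leq L_{\text{d}}\|\lambda^0\|^2+\frac{(k+1)(k+2)(k+3)}{12}\ei.
\]

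Finally I would divide by $\frac{(k+1)(k+2)}{4}$, replace $d({\hat \lambda}^k)$ by its upper bound $F^*$ so as to lower-bound the left side, and apply the elementary estimates $(k+1)^2\leq(k+1)(k+2)$ and $(k+3)/3\leq k+1$ (valid for $k\geq 0$) to reach the claimed bound. The only genuine subtlety is the cancellation in the summand, together with keeping track of how the cubic error term $\frac{(k+1)(k+2)(k+3)}{12}\ei$ contracts into $(k+1)\ei$; everything else is routine bookkeeping.
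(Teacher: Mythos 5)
Your proof is correct and follows essentially the same route as the paper: the left inequality is the Slater/Cauchy--Schwartz argument already used for Theorem \ref{theorem_primal_grad}, combined with the feasibility bound \eqref{ineq_bound_fesa_fg}, and the right inequality comes from Lemma \ref{prop_rec} evaluated at $\lambda=0$, convexity of $F$ with the weights of \eqref{primal_point}, the bound $d(\hat{\lambda}^k)\leq F^*$, and the estimates $(k+1)^2\leq(k+1)(k+2)$ and $(k+3)/3\leq k+1$. The only cosmetic difference is that the paper reuses the intermediate inequality \eqref{ineq_fesa0} (established with the max over $\lambda$ kept, in the proof of Theorem \ref{theorem_primal_fesa}) and then sets $\lambda=0$, whereas you substitute $\lambda=0$ directly into \eqref{eq_rec} and exploit the cancellation $\bar{d}(\lambda^s)-\langle\bar{\nabla}d(\lambda^s),\lambda^s\rangle=F(\mathbf{\bar u}^s)$ --- the same computation carried out in a different order.
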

\proof The left-hand side inequality can be derived similarly as in
 the previous section (see the proof of Theorem
\ref{theorem_primal_grad}). In order to prove the right-hand side
inequality we use \eqref{ineq_fesa0}:
\begin{align*}
F(\mathbf{ \hat u}^k) \!-\! d({\hat \lambda}^k) & \!\leq\!
 - \max_{\lambda \geq 0 } - \frac{4 L_{\text{d}}}{(k+1)^2} \| \lambda -\lambda^0\|^2
+ \langle \lambda, h(\mathbf{ \hat u}^k) \rangle
+\frac{k+3}{3}\ei\\
&\!\!\!\overset{\lambda=0}{\leq} \frac{4
L_{\text{d}}\|\lambda^0\|^2}{(k+1)^2}+\frac{k+3}{3}\ei.
\end{align*}
Taking now into account that $d({\hat \lambda}^k) \leq F^*$ and
$(k+3)/3 \leq k+1$ we get the result. \qed

Similar to the previous section, assume that we fix the outer
accuracy to a desired value $\eo$. We are interested in finding the
number of outer iterations $k_{\text{out}}$ and a relation between
$\eo$ and $\ei$ such that primal feasibility violation and
suboptimality satisfy \eqref{condition_outer}.  For simplicity, we
again  consider  $\lambda^0=0$ and thus
$R_{\mathrm{d}}=\|\lambda^*\|$. Using now Theorems
\ref{theorem_dual_optim}, \ref{theorem_primal_fesa} and
\ref{theorem_primal_optim} we can take:
\begin{equation*}
k_{\text{out}}=\left\lfloor 2R_{\mathrm{d}}\sqrt{\frac{
L_{\text{d}}}{\eo}}\right\rfloor~~\text{and}~~\ei=\frac{\eo\sqrt{\eo}}{2R_{\mathrm{d}}\sqrt{L_{\mathrm{d}}}},
\end{equation*}
for which we obtain:
\begin{align*}
&\bo{\hat{u}}^{k_{\text{out}}} \in \bo{U},\;
\|[h(\bo{\hat{u}}^{k_{\text{out}}})]_+\| \!\leq\!
\frac{6}{R_{\mathrm{d}}}\eo~,\\
 -6\eo \leq F&(\bo{\hat{u}}^{k_{\text{out}}})-F^* \leq
2\eo~~\mathrm{and}~~F^*- d(\hat{\lambda}^{k_{\text{out}}}) \!\leq
3\eo.
\end{align*}

Note that for these choices of $k_{\text{out}}$ and $\ei$ the inner
problems \eqref{ul} have to be solved with an accuracy of order
$\mathcal{O}\left(\eo\sqrt{\eo}\right)$, i.e.
$\ei=\eo\sqrt{\eo}/(2R_{\mathrm{d}} \sqrt{L_{\mathrm{d}}})$ in
\eqref{inner_crit}. We can conclude that the \textbf{(IDFG)} method
is more sensitive than the \textbf{(IDG)} method due to the error
accumulation.

\begin{remark} $(i)$ Since in practice we usually cannot compute
exactly the value $R_{\mathrm{d}}=\|\lambda^*\|$, we can use instead
the following upper bound \cite[Lemma 1]{NedOzd:09}:
\begin{equation}
\label{eq_upper_norm_mult} R_{\mathrm{d}} \leq
\mathcal{R}_{\mathrm{d}}=\frac{F(\tilde{\bu})-d(\tilde{\lambda})}{\min_{1\leq
j \leq p} \left\{-h^j(\tilde{\bu})\right\}},
\end{equation}
where $\tilde{\bu}$ denotes a Slater vector for  problem
\eqref{original_primal} (see Assumption \ref{as_Slater}) and
$\tilde{\lambda} \in \rset^p_+$. The effects of this choice  on the
overall
 performance of the new algorithms are discussed in Section \ref{test1}. \\
$(ii)$ The results presented in Sections \ref{sec_dg} and
\ref{sec_dfg} also hold in the case when we solve
 the inner problems exactly, i.e. $\ei = 0$ in \eqref{inner_crit}, or
when $\bo{U} = \rset^n$, i.e. the inner problems are
unconstrained. \\
$(iii)$ Note that if $\lambda^0=0$ and we solve the inner problems
exactly, i.e. $\ei=0$, then we have $F(\hat{\bu}^{k_{\mathrm{out}}})
\leq F^*$, i.e. we are always below the optimal value in  algorithms
\textbf{(IDG)} and \textbf{(IDFG)}.
\end{remark}


\section{Solving the inner problem using a parallel coordinate descent method}
\label{sec_coordinate}

In this section we propose a block-coordinate descent based
algorithm which permits to solve in parallel, for a fixed
$\lambda^k$, the inner optimization problem \eqref{ul}:
\begin{equation}
\label{inner_pcd} \bo{u}^k=\arg \min_{\bo{u} \in \bo{U}}
\mathcal{L}(\bo{u},\lambda^k),
\end{equation}

We consider for the variable $\bo{u}$ the partition
$\bo{u}=\left[\bo{u}_1^T \dots \bo{u}_M^T\right]^T$ and the
constraints set $\bo{U}$ can be represented in the form of a
Cartesian product $\bo{U}=\bo{U}_1\times \dots \times \bo{U}_M$,
with $\bo{u}_i \in \bo{U}_i \subseteq \rset^{n_i}$ being simple
sets, i.e. the projection on these sets can be computed very
efficiently. We also define the following partition of the identity
matrix: $I=\left[E_1 \dots E_M\right] \in \rset^{n \times n}$, where
$E_i \in \rset^{n \times n_i}$ for all $i=1, \dots, M$,
$n=\sum_{i=1}^M n_i$. Thus, $\bo{u}$ can be represented as:
$\bo{u}~=~\sum^{M}_{i=1} E_i \bo{u}_i$.

Since for each outer iteration the dual variable $\lambda^k$ is
fixed, for the simplicity of the exposition we will drop the second
argument of $\mathcal{L}$, i.e. we will use the notation
$\mathcal{L}_k(\bo{u})=\mathcal{L}(\bo{u},\lambda^k)$. We will also
denote by $\lu_k^*=\lu_k(\bu^k)$ the optimal value of
\eqref{inner_pcd}. We define the partial gradient of $\lu_k$ at
$\bo{u}$, denoted $\nabla_i \lu_k(\bo{u}) \in \rset^{n_i}$, as
$\nabla_i \lu_k(\bo{u})= E_i^T \nabla \lu_k(\bo{u})$ for all
$i=1,\dots,M$.

We consider the following assumption on the gradient of $\lu_k$:
\begin{assumption}
\label{as_lipschitz_lagr} The gradient of $\lu_k$ is coordinatewise
Lipschitz continuous with constants $L_i > 0$, i.e. for all
$i=1,\dots,M$:
\begin{equation*}
\left \| \nabla_i \lu_k (\bo{u}+{E}_i d_i)-\nabla_i \lu_k(\bo{u})
\right \| \leq L_i \left \| d_i \right \| ~ \forall \bo{u} \in
\rset^n, d_i \in \mathbb{R}^{n_i}.
\end{equation*}
\end{assumption}

We recall that $\lu_k(\bu)=F(\bu)+\iprod{\lambda_k,h(\bu)}$.
Assumption \ref{as_lipschitz_lagr} is valid for example  if $F$ has
coordinatewise Lipschitz continuous gradient and the components of
$h$ are linear or convex quadratic functions. Note also that
coordinatewise Lipschitz continuity also implies global Lipschitz
continuity on extended space $\rset^n$, with Lipschitz constant
$\sum \limits_{i=1}^M L_i$. Further, based on Assumption
\ref{as_strong_lipschitz}, since $F$ is
$\sigma_{\mathrm{F}}$-strongly convex we have that $\lu_k$ is also
strongly convex (with a parameter $\sigma_\lu$) w.r.t. the Euclidean
norm.  We also assume that $\bo{U}_i \subseteq \rset^{n_i}$ are
simple, compact, convex sets (e.g. hyperbox, Euclidean ball, entire
space $\rset^{n_i}$, etc). There exist many parallel algorithms in
the literature for solving the optimization problem
\eqref{inner_pcd}: e.g. Jacobi algorithms \cite{BerTsi:89,
DoaKev:11}, coordinate descent methods \cite{SteVen:10}, etc.
However, the rate of convergence for these algorithms is guaranteed
under more conservative assumptions than the ones required for the
parallel coordinate descent  method proposed in this section.

Due to  Assumption \ref{as_lipschitz_lagr} we have \cite[Section
2]{c2}:
\begin{equation}\label{ec7}
\lu_k(\bo{u}+E_i d_i)\!\leq\! \lu_k(\bo{u})+ \left <\nabla_i \lu_k
(\bo{u}), d_i \right
>+\frac{L_i}{2} \left \| d_i \right \|^2 ~
\forall \bo{u} \in \rset^n, \;  d_i \in \mathbb{R}^{n_i}, \;
i=1,\dots,M.
\end{equation}
We introduce the following norm for the extended space
$\mathbb{R}^n$:
\begin{equation}\label{ec12} \left\| \bo{u} \right
\|_1^2=\sum^{M}_{i=1}L_i \left \| \bo{u}_i \right \|^2,
\end{equation}
which will prove useful for estimating the rate of convergence for
our algorithm. Since $\lu_k$ is $\sigma_\lu$-strongly convex w.r.t.
the Euclidean norm, it is also strongly convex w.r.t. $\left \|
\cdot \right \|_1$
 with parameter $\sigma_1 \leq \frac{\sigma_\lu}{L_{\text{max}}}$,
  where $L_{\text{max}}=\max \limits_{i=1,\dots,M} L_i$. Then, the following inequality holds \cite{Nes:04}:
\begin{equation}\label{strong_conv}
\lu_k(\wb)\!\geq\!\lu_k(\bu) + \left < \nabla
\lu_k(\bu),\wb\!-\!\bu\right
> + \frac{\sigma_1}{2} \left \|\wb-\bu \right \|_1^2 \;\;  \forall
\bo{w}, \bo{u} \in \rset^n
\end{equation}
and combining it with \eqref{ec7} we can deduce that $\sigma_1
\leq 1$.

For solving the inner problem \eqref{inner_pcd} we propose the
following \textit{parallel coordinate descent method}, which is
similar to the algorithm from \cite{SteVen:10}, but has much simpler
iteration:
\begin{center}
\framebox{
\parbox{6.5cm}{
\begin{center}
{ Algorithm \textbf{(PCD)}$(\bu^{k,0})$ }
\end{center}
{Given $\bu^{k,0}$, for $l \geq 0$:\\
For $i=1,\dots, M$ compute in parallel}
\begin{enumerate}
\item{{$\vb_i^{k,l}=\left[\bu_i^{k,l}-\frac{1}{L_i}\nabla_i
\lu_k(\bu^{k,l})\right]_{\bo{U}_i}$} } \item{{
$\bu_i^{k,l+1}=\frac{1}{M}\vb_i^{k,l}+\frac{M-1}{M}\bu_i^{k,l}$}}.
\end{enumerate}
}}
\end{center}
\vspace{0.1cm}

From the optimality conditions for $\vb_i^{k,l}$ we get:
\begin{align}
\label{opcond}  \left< \nabla_i \lu_k (\bu^{k,l}) \!+\!  L_i
(\vb_i^{k,l} \!-\! \bu^{k,l}_i), \vb_i \!-\vb_i^{k,l}\!  \right >
\!\geq\! 0 \;\; \forall \vb_i \in \bo{U}_{i}.
\end{align}
Taking $\vb_i=\bu_i^{k,l}$ in \eqref{opcond} and combining with
\eqref{ec7}  and
convexity of $\lu_k$ we can conclude that algorithm \textbf{(PCD)}
decreases the objective function at each inner iteration $l$:
\begin{equation*}
 \lu_k(\bu^{k,l+1}) \leq \lu_k(\bu^{k,l})\;\;\;  \forall l
\geq 0.
\end{equation*}

\begin{remark}
Note that if the sets $\bo{U}_{i}$ are simple and $\lu_k$ has cheap
coordinate derivatives, then computing $\vb_i^{k,l}$ can be done
numerically very efficient. For example, in case of  hyperbox sets,
the projection on $\bo{U}_i$ can be done in
$\mathcal{O}\left(n_i\right)$ operations and if we also consider
$\lu_k$ to be quadratic, then the cost of computing $\nabla_i
\lu_k(\bu)$ is $\mathcal{O}(n \cdot n_i)$. Moreover, if its Hessian
is sparse, then the cost of computing $\nabla_i \lu_k(\bu)$ is
usually much cheaper. Thus, for quadratic problems the worst case
complexity per iteration of our method is $\mathcal{O}(n^2)$. Note
that the complexity per iteration of the Jacobi type methods from
\cite{BerTsi:89,DoaKev:11,SteVen:10} is at least $\mathcal{O}(n^2+
\sum_{i=1}^M n_i^3)$ provided that the local quadratic subproblems
are solved with an interior point solver.
\end{remark}

The following theorem provides the convergence rate of  algorithm
\textbf{(PCD)} and employs standard techniques for proving
convergence of the projected gradient method~\cite{c2,Nes:04}.
\begin{theorem}
\label{thc} Let Assumption \ref{as_lipschitz_lagr} hold and $\lu_k$
be $\sigma_1$-strongly convex w.r.t. $\left \| \cdot \right \|_1$.
Then,  the following linear rate of convergence is achieved for
algorithm \textbf{(PCD)}:
\begin{align*}
\lu_k(\bu^{k,l})\!-\!\lu_k^*  \!\leq\! \left ( 1 \!-\!
\frac{2\sigma_1}{M(1+\sigma_1)} \right )^{l} \!\! \left(
\frac{1}{2} r_u^{0} \!+\! \lu_k(\bu^{k,0}) \!- \! \lu_k^*
\right),
\end{align*}
where $r_u^{0}=\|\bu^{k,0}-\bu^k\|_1^2$.
\end{theorem}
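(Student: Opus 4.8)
The plan is to introduce a single potential (Lyapunov) function that couples the objective gap with the squared distance to the inner optimizer, and to show it contracts by the factor $\rho = 1 - \frac{2\sigma_1}{M(1+\sigma_1)}$ at every inner iteration. Writing $\Delta^l = \lu_k(\bu^{k,l}) - \lu_k^*$ and $r_u^l = \|\bu^{k,l}-\bu^k\|_1^2$, I would prove $V^{l+1} \leq \rho\, V^l$ for $V^l = \Delta^l + \tfrac{1}{2} r_u^l$; since $\Delta^l \leq V^l$, iterating this and evaluating at $l=0$ gives exactly the claimed bound. The central object is the separable quadratic model $\psi(\bv) = \iprod{\nabla \lu_k(\bu^{k,l}),\bv-\bu^{k,l}} + \tfrac{1}{2}\|\bv-\bu^{k,l}\|_1^2$, whose minimizer over $\bo{U}=\bo{U}_1\times\cdots\times\bo{U}_M$ is precisely the aggregated projected-gradient step $\vb^{k,l}=\sum_i E_i \vb_i^{k,l}$; indeed the coordinatewise optimality \eqref{opcond} is exactly the first-order condition for minimizing $\psi$ coordinate by coordinate. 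I will abbreviate $Q=\psi(\vb^{k,l})=\min_{\bv\in\bo{U}}\psi(\bv)$.

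First I would establish the objective decrease. Since $\bu^{k,l+1}$ is the $\tfrac1M$-average of the $M$ single-block updates $\bw^{(i)}=\bu^{k,l}+E_i(\vb_i^{k,l}-\bu_i^{k,l})$, convexity of $\lu_k$ (Jensen) combined with the coordinatewise descent inequality \eqref{ec7} applied to each $\bw^{(i)}$ yields
\begin{equation*}
\Delta^{l+1} \leq \Delta^l + \tfrac{1}{M} Q .
\end{equation*}
Next I would control the distance. Convexity of $\|\cdot\|_1^2$ applied to $\bu^{k,l+1}=\tfrac1M\vb^{k,l}+\tfrac{M-1}{M}\bu^{k,l}$ gives $r_u^{l+1}\leq \tfrac1M\|\vb^{k,l}-\bu^k\|_1^2 + \tfrac{M-1}{M}r_u^l$. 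I then bound $\|\vb^{k,l}-\bu^k\|_1^2\leq 2(\psi(\bu^k)-Q)$ using that $\psi$ is $1$-strongly convex with respect to $\|\cdot\|_1$ and is minimized at $\vb^{k,l}$, and I bound $\psi(\bu^k)\leq -\Delta^l+\tfrac{1-\sigma_1}{2}r_u^l$ by invoking the $\sigma_1$-strong convexity \eqref{strong_conv} of $\lu_k$ to estimate the linear term $\iprod{\nabla \lu_k(\bu^{k,l}),\bu^k-\bu^{k,l}}$. This produces $\tfrac12 r_u^{l+1}\leq \tfrac{M-\sigma_1}{2M}r_u^l-\tfrac1M\Delta^l-\tfrac1M Q$. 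Adding the two recursions, the $Q$ terms cancel and I obtain the clean one-step estimate
\begin{equation*}
V^{l+1} \leq \Bigl(1-\tfrac1M\Bigr)\Delta^l + \tfrac{M-\sigma_1}{2M}r_u^l .
\end{equation*}

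The main obstacle is the final balancing step, which is where the precise constant $\frac{2\sigma_1}{M(1+\sigma_1)}$ must appear. Demanding the displayed right-hand side to be at most $\rho V^l=\rho\Delta^l+\tfrac{\rho}{2}r_u^l$ for \emph{arbitrary} nonnegative $\Delta^l,r_u^l$ would only force $\rho\geq 1-\tfrac{\sigma_1}{M}$, which is strictly weaker than the asserted rate. To sharpen it I would exploit the extra coupling $r_u^l\leq \tfrac{2}{\sigma_1}\Delta^l$, which follows from $\sigma_1$-strong convexity of $\lu_k$ at its minimizer $\bu^k$ (namely $\Delta^l\geq \tfrac{\sigma_1}{2}r_u^l$). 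Writing the required inequality as $a\Delta^l+b\, r_u^l\geq 0$ with $a=\rho-1+\tfrac1M$ and $b=\tfrac12(\rho-1+\tfrac{\sigma_1}{M})$, one checks $a\geq0$ and $b\leq0$ at the candidate $\rho$, so the worst case is $r_u^l=\tfrac{2}{\sigma_1}\Delta^l$; substituting reduces the condition to $(\rho-1)\tfrac{1+\sigma_1}{\sigma_1}+\tfrac{2}{M}\geq 0$, i.e.\ exactly $\rho\geq 1-\tfrac{2\sigma_1}{M(1+\sigma_1)}$. Choosing $\rho$ equal to this threshold closes the recursion, and I would conclude $\Delta^l\leq V^l\leq \rho^l V^0=\rho^l\bigl(\tfrac12 r_u^0+\lu_k(\bu^{k,0})-\lu_k^*\bigr)$. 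Throughout, $\sigma_1\leq 1$ (established just before the statement) is what guarantees $b\leq0$ and hence the validity of this worst-case argument.
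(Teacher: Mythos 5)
Your proof is correct and follows essentially the same route as the paper's: both contract the potential $V^l=\lu_k(\bu^{k,l})-\lu_k^*+\tfrac{1}{2}r_u^l$, and your two recursions (with the $Q$-terms cancelling) land on exactly the paper's intermediate one-step inequality $V^{l+1}\leq\bigl(1-\tfrac{1}{M}\bigr)\Delta^l+\tfrac{M-\sigma_1}{2M}r_u^l$, after which both use the coupling $\Delta^l\geq\tfrac{\sigma_1}{2}r_u^l$ to extract the factor $1-\tfrac{2\sigma_1}{M(1+\sigma_1)}$. The remaining differences are cosmetic: you package the optimality condition \eqref{opcond} as $1$-strong convexity of the explicit model $\psi$ and apply Jensen to $\|\cdot\|_1^2$, where the paper expands $r_u^{l+1}$ directly, and your worst-case balancing argument is the paper's convex-combination step with $\gamma=\tfrac{2\sigma_1}{1+\sigma_1}$ in different form.
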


\proof We introduce the following term: $r_u^{l}= \left \|
\bu^{k,l} \!-\!\bu^k \right \| _1 ^2= \sum_{i=1}^{M} L_i \left <
\bu^{k,l}_i \! - \!\bu^k_i, \bu^{k,l}_i\! - \!\bu^k_i \right >$,
where we recall that $\bu^k$ is the optimal solution of
\eqref{inner_pcd} and $\bu_i^k=E_i^T\bu^k$. Further, using
\eqref{opcond} and similar derivations as in \cite{c2} we can
write:
\begin{align*} r_u^{l+1}\!& \! =\! \sum_{i=1}^{M}
L_i \left \| \frac{1}{M} \vb_i^{k,l}+(1\! -
 \!\frac{1}{M})\bu^{k,l}_i\! - \! \bu^k_i \right \|^2 \nonumber \\
&\leq \!r_u^{l}\!\!-\!\frac{2}{M}\!\!\sum_{i=1}^{M}\!\!
\left(\!\frac{L_i}{2} \left \|
\vb_i^{k,l}\!\!\!-\!\bu_i^{k,l}\right \| ^2\!\!+\!\left < \nabla_i
\lu_k(\bu^{k,l}),\vb_i^{k,l}\!\!-\!\bu_i^{k,l} \right > \!+\!\left
< \nabla_i \lu_k(\bu^{k,l}),\bu^k_i\!-\!\bu^{k,l}_i \right>
\right). \nonumber
\end{align*}
By convexity of $\lu_k$ and \eqref{ec7} we obtain:
\begin{equation*}
\label{rez_cent} r_u^{l+1} \leq r_u^{l} -
2(\lu_k(\bu^{k,l+1})-\lu_k(\bu^{k,l}))+ \frac{2}{M} \left < \nabla
\lu_k(\bu^{k,l}), \bu^k-\bu^{k,l} \right
>.
\end{equation*}
If we now take $\wb= \bu^k$ and $\bu=\bu^{k,l}$ in
\eqref{strong_conv} and use the previous inequality we get:
\begin{equation}\label{rezult1} \frac{1}{2} r_u^{l+1}
+ \lu_k(\bu^{k,l+1})-\lu_k^* \leq \frac{1}{2} r_u^{l}
+\lu_k(\bu^{k,l})-\lu_k^* -\frac{1}{M} (\lu_k(\bu^{k,l})-\lu_k^*
+\frac{\sigma_1}{2} r_u^{l}).
\end{equation}
From the strong convexity of $\lu_k$ in \eqref{strong_conv} we
also get: $\lu_k(\bu^{k,l})-\lu_k^* + \frac{\sigma_1}{2} r_u^{l}
\geq \sigma_1 r_u^{l}$. We now define
$\gamma=\frac{2\sigma_1}{1+\sigma_1} \in [0,1]$ and using the
previous inequality  we obtain:
\begin{align*}
 \lu_k(\bu^{k,l})\!-\!\lu_k^*+\frac{\sigma_1}{2} r_u^{l}\! \leq
 &\gamma \left(\lu_k(\bu^{k,l})\!-\!\lu_k^*\!+\!\frac{\sigma_1}{2} r_u^{l} \right) + (1-\gamma)\sigma_1 r_u^{l}.
\end{align*}
Using this inequality in \eqref{rezult1} we get:
\begin{align*}\label{rezfin}
\frac{1}{2} r_{l+1}^2 \!+\! \lu_k(\bu^{k,l+1})-\lu_k^*  \!\leq\!
\left ( 1- \frac{\gamma}{M} \right ) \left ( \frac{1}{2} r_l^2
+\lu_k(\bu^{k,l}) -\lu_k^* \right ).
\end{align*}
Applying this inequality iteratively, we obtain for $l \geq 0$:
\begin{align*}
\frac{1}{2} r_{l}^2 \!+\! \lu_k(\bu^{k,l}) \!-\! \lu_k^* \!\leq\!
\left (1 \!-\! \frac{\gamma}{M} \right )^{l} \left ( \frac{1}{2}
r_0^2 +\lu_k(\bu^{k,0}) - \lu_k^*  \right ),
\end{align*}
and by replacing $\gamma=\frac{2\sigma_1}{1+\sigma_1} $ we obtain
the result. \qed

\noindent We can conclude from Theorem \ref{thc} that the number of
inner iterations $l_{\mathrm{in}}$ which has to be performed such
that stopping criterion \eqref{inner_crit} holds for an inner
accuracy $\ei$ is given by \cite{Nes:04}:
\begin{equation}
 \label{liniter}
 l_{\mathrm{in}} = \left\lfloor \frac{M L_\text{max}}{\sigma_\lu} \ln
 \frac{3 L_\text{max} D_\bo{U}^2}{\ei} \right \rfloor.
\end{equation}

The output of  algorithm \textbf{(PCD)} is  $\bar \bu^k = \bu^{k,
l_{\mathrm{in}}}$. To conclude, we present now the following
algorithmic framework for solving the original problem
\eqref{original_primal}:
\begin{algorithm}{\textit{Inexact dual  (fast) gradient method}}
\label{alg:A1}
\newline
\textbf{Initialization:} Choose an outer accuracy $\eo$.\\
 Compute $\ei$ and $k_{\mathrm{out}}$ as in Sections \ref{sec_dg} or \ref{sec_dfg}. \\
 Choose an initial point $\lambda^0 \in \rset^p_+$.
\newline
\textbf{Outer loop:} For $k=0, 1,\dots, k_{\mathrm{out}}$, perform:
\begin{itemize}
\item[]Step 1. \textbf{Inner loop}: For given $\lambda^k$,
choose $\bu^{k,0}  \in \bo{U}$. \\
 \hspace*{1.2cm} Compute $l_{\mathrm{in}}$ as in eq. \eqref{liniter}. \\
 \hspace*{1.2cm} For $l=0, 1, \dots, \l_{\mathrm{in}}$ apply algorithm \textbf{(PCD)} to obtain
$\bar \bu^k = \bu^{k, l_{\mathrm{in}}}$. \item[]Step 2. Compute
the approximate gradient $\nabla{\bar{d}}(\lambda_k) =
h(\bar{\bu}^k)$. \item[]Step 3. Update $\lambda^{k+1}$ as in Alg.
\textbf{(IDG)} or $(\lambda^{k+1},{\hat \lambda}^k)$
 as in Alg. \textbf{(IDFG)}.
\item[]Step 4.  Update average sequences ($\hat{\bu}^k$,
$\hat{\lambda}^k$).
\end{itemize}
\textbf{Output}: generated approximate primal-dual solutions
$(\hat{\bu}^k, \hat{\lambda}^k)$.
\newline
\end{algorithm}


\section{Distributed MPC problems for
constrained network systems} \label{sec_application_MPC} In this
section we apply the algorithms \textbf{(IDG)}, \textbf{(IDFG)} and
\textbf{(PCD)} for solving in a distributed fashion MPC problems
arising in network systems.

\subsection{MPC formulation for network systems}
\label{subsec_mpc_formulation} We consider discrete-time network
systems, which  are usually modelled  by a graph  whose nodes
represents subsystems and whose arcs indicate dynamic couplings
between these subsystems, defined by the following linear state
equations:
\begin{equation}
\label{mod3} x_i(t+1) = \sum _{j \in \mathcal {N}^{i} } A_{ij}
x_{j}(t) + B_{ij} u_j(t)  \qquad \forall i=1, \dots, M,
\end{equation}
where $M$ denotes the number of interconnected subsystems, $x_i(t)
\in \mathbb{R}^{n_{x_i}}$ and $u_i(t) \in \mathbb{R}^{n_{u_i}}$
represent the state and  the input of $i$th subsystem at time $t$,
$A_{ij} \in \mathbb{R}^{n_{x_i} \times n_{x_j}}$ and $B_{ij} \in
\mathbb{R}^{n_{x_i} \times n_{u_j}}$ and $\mathcal {N}^{i}$
denotes the neighbors of the $i$th subsystem including  $i$. In a
particular case frequently found in literature
\cite{MaeMun:11,NecSuy:08,SteVen:10} the influence between
neighboring subsystems is given only in terms of inputs:
\begin{equation}\label{simp1}
x_i(t+1) = A_{ii} x_i(t)  + \sum _{j \in \mathcal {N}^{i}} B_{ij}
u_j(t).
\end{equation}
We also impose local state and input constraints: \[ x_i(t) \in X_i,
\quad   u_i(t)  \in U_i \qquad \forall i=1,\dots, M, \;\;   t \geq
0,
\]
where $X_i \subseteq \mathbb{R}^{n_{x_i}}$ and  $U_i \subseteq
\mathbb{R}^{n_{u_i}}$ are simple convex sets. For a prediction
horizon of length $N$, we consider  quadratic stage and  final costs
for each subsystem $i$:
\[ \sum_{t=0}^{N-1} \left \| x_i(t) \right \|^2_{Q_i}  + \left \|
u_i(t) \right \|^2_{R_i} + \left \| x_i(N) \right \|^2_{P_i},
\]
where matrices $Q_i, P_i$  and $R_i$ are positive definite and
$\left \| x \right \|^2_{P}=~x^T P x$.

We  now formulate the centralized MPC problem for \eqref{mod3},
for a given initial state $x$:
\begin{align}
 & F^*(x) =  \min _{x_i(t),u_i(t)}  \sum_{i=1}^M \sum_{t=0}^{N-1}
\left \| x_i(t) \right \|^2_{Q_i}  + \left \|
u_i(t) \right \|^2_{R_i} + \left \| x_i(N) \right \|^2_{P_i}  \label{mod5} \\
& \text{s.t.:} \;\; x_i(t+1) =
\sum _{j \in \mathcal {N}^{i} } A_{ij}  x_j (t) + B_{ij} u_j(t), \; x_i(0) =x_i,  \nonumber \\
& \;\;\;\;\;\;\;\;  x_i(t) \in X_i, \; u_i(t) \in U_i, \; x_i(N) \in
X_i^{\text{f}}   \quad  \forall i=1,\dots,M, ~t=0,\dots N-1,
\nonumber
\end{align}
where $X_i^{\text{f}}$ are terminal sets  chosen under some
appropriate conditions to ensure stability of the MPC scheme (see
e.g. \cite{ScoMay:99}). For the input trajectory of subsystem $i$
and the overall input trajectory we use the notations:
\begin{align*}
\bo{u}_i& = \left[u_{i}(0)^T \dots u_{i}(N-1)^T \right]^T \in
\rset^{n_i}, \;\; \bo{u}=\left[\bo{u}_{1}^T \dots \bo{u}_{M}
^T\right]^T \in \rset^n.
\end{align*}

We assume in addition that the local constraints sets $U_i, X_i$ and
the  terminal sets $X_i^{\text{f}}$ are polyhedral for all
subsystems. An extension to general convex sets is straightforward
and we omit it here due to space limitations. By eliminating the
states from the dynamics \eqref{mod3}, problem \eqref{mod5} can be
expressed as a large-scale quadratic convex optimization problem of
the form:
\begin{align}\label{prob_princc}
F^*(x) = & \min_{\mathbf{u}_1 \in \bo{U}_{1}, \dots, \mathbf{u}_M
\in \bo{U}_{M}}
  \frac{1}{2} \bo{u}^T \bo{H} \bo{u} + (\Wb x + \wb)^T \bo{u}\\
& \text{s.t.:} \;\;\; \bo{G} \bo{u}+ \Eb x + \gb \leq 0, \nonumber
\end{align}
where $\bo{H} \in \rset^{n \times n}$ is positive definite  due to
the assumption that all $R_i$ are positive definite and the
inequalities   $\bo{G} \bo{u}+ \Eb x + \gb \leq 0$, with $\bo{G} \in
\rset^{p \times n}$, are obtained by eliminating the states from the
constraints $x_i(t) \in X_i$ and $x_i(N) \in X_i^{\text{f}}$ for all
$i$ and $t$. If the projection on the input constraints set $U_i$ is
difficult, we can also move the input constraints in the
complicating constraints $\bo{G} \bo{u}+ \Eb x + \gb \leq 0$. In
this case $\bo{U}_i = \rset^{n_i}$. Otherwise, i.e. the set $U_i$ is
simple (e.g.  hyperbox), the convex set $\bo{U}_i = \prod_{t=1}^N
U_i$. In MPC, at each time instant, given the initial state $x \in
X_N$, where $X_N \subseteq \prod_{i=1}^M X_i$ is a region  of
attraction \cite{ScoMay:99}, we need to solve the optimization
problem \eqref{mod5} or equivalently \eqref{prob_princc}.  We assume
for \eqref{prob_princc} that  for any $x \in X_N$ there exists a
``strict Slater'' vector $\tilde{\bu}$, i.e. $\tilde{\bu} \in
\bo{U}$ and $\Gb \tilde{\bu} + \Eb x +\gb <0$.

In the following sections we discuss how we can solve the MPC
problem \eqref{prob_princc} by combining the algorithms
\textbf{(IDG)}, \textbf{(IDFG)} and \textbf{(PCD)} with tightening
constraints techniques. We will derive  estimates for the  number of
iterations required for finding a suboptimal feasible solution.


\subsection{Tightening the coupling constraints}
\label{sec_tightening the coupling constraints} In many
applications, like e.g. the MPC problem discussed above, the
constraints may represent different requirements on physical
limitation of actuators, safety limits and operating conditions of
the controlled plant. Thus, ensuring the feasibility of the primal
variables, i.e. $\bo{u} \in \bo{U}$ and $\bo{G} \bo{u}+ \Eb x + \gb
\leq 0$, becomes a prerequisite. However, as we have seen in
Sections \ref{sec_dg} and \ref{sec_dfg}, dual methods can ensure
these requirements only at optimality, which is usually impossible
to attain in practice. Therefore, in our approach, instead of
solving the original problem \eqref{prob_princc}, we consider a
tightened problem (see also \cite{DoaKev:11} for a similar approach
where the tightened dual problem is solved using a subgradient
algorithm with very slow convergence rate of order
$\mathcal{O}\left(1/\sqrt{k}\right)$ and approximate solutions for
the inner problems are computed using the Jacobi algorithm
\cite{BerTsi:89}).

We introduce the following tightened problem associated with the
original problem \eqref{prob_princc}:
\begin{align}\label{prob_tight}
 F^*_{\ec}(x)=&\min_{\mathbf{u} \in \bo{U}}
   F(x,\bo{u}) \quad \left(=\frac{1}{2} \ \bo{u}^T \bo{H} \bo{u} + (\Wb x + \wb)^T \bo{u}\right)\\
& \text{s.t.:} \;\;\;\bo{G} \bo{u}+ \Eb x + \gb +\ec \eb \leq 0,
\nonumber
\end{align}
where $\eb \in \rset^p$ denotes the vector with all entries $1$ and
\begin{equation}
\label{choice_ec} 0 < \ec \leq \frac{1}{2}\min \limits_{j=1,\dots,p}
 \{-\left(\bo{G} \tilde{\bo{u}} +\Eb x +\gb\right)_j\},
\end{equation}
with $\tilde{\bo{u}}$ being a strict Slater vector for
\eqref{prob_princc}.  Note that for this choice of $\ec$, we have
that $\tilde{\bo{u}}$ is also  a strict Slater vector  for the
tightened problem \eqref{prob_tight}. Similar to Section
\ref{sec_mainsec}, for problem \eqref{prob_tight} we also denote by
$\lu_{\ec}$ the partial Lagrangian w.r.t. the complicating
constraints $\bo{G} \bo{u}+ \Eb x + \gb +\ec \eb \leq 0$ and by
$d_{\ec}$ the corresponding dual function.

In the following sections we will see how we can ensure the
feasibility, suboptimality  and stability  of the MPC scheme given
in \eqref{mod5} based on the suboptimal  input
$\hat{\bu}^{k_\mathrm{out}}$ obtained by solving the tightened
problem \eqref{prob_tight} with the newly developed algorithms
\textbf{(IDG)}/\textbf{(IDFG)} and~\textbf{(PCD)}.


\subsection{Feasibility and suboptimality of the MPC scheme}
\label{subsec_feas_stab_subopt} At each time instant  of the MPC
scheme, given the initial state $x$ in the region of attraction
$X_N$, instead of solving the optimization problem
\eqref{prob_princc} we solve the tightened problem
\eqref{prob_tight} using the algorithms \textbf{(IDG)} or
\textbf{(IDFG)} for the outer problem and algorithm \textbf{(PCD)}
for the inner problem. At each step we obtain a suboptimal  input
$\hat{\bu}^{k_\mathrm{out}}$ and according to the receding horizon
strategy we apply to the system only the first  input
$\hat{\bu}^{k_\mathrm{out}}(0)$.  However, we want that the
generated   control sequence $\hat{\bu}^{k_\mathrm{out}}$ to be
suboptimal and feasible for the original MPC problem
\eqref{prob_princc}. Thus, we first need to find a relation
between $F^*_{\ec}(x)$ and $F^*(x)$. Let us denote by
$\lambda_{\ec}^*$ an optimal Lagrange multiplier for the
inequality constraints in \eqref{prob_tight}. The following upper
bound can be established for any strict Slater vector
$\tilde{\bu}$ and dual multiplier $\tilde{\lambda} \in \rset^p_+$:
\begin{align}
\|\lambda_{\ec}^*\| &\overset{\eqref{eq_upper_norm_mult}}{\leq}
\frac{F(x,\tilde{\bu})-\min_{\bu \in \bo{U}}
F(x,\bu)+\iprod{\tilde{\lambda},\bo{G}\bu+\bo{E}x+\gb+\ec\eb}}{\min
\limits_{j=1,\dots,p}
 \{-\left(\bo{G} \tilde{\bo{u}} +\Eb x +\gb+\ec\right)_j\} }\nonumber\\
& = \frac{\left[F(x,\tilde{\bu})-\min_{\bu \in \bo{U}}
F(x,\bu)+\iprod{\tilde{\lambda},\bo{G}\bu+\bo{E}x+\gb}\right]-\iprod{\tilde{\lambda},\ec\eb}}{\min
\limits_{j=1,\dots,p}
 \{-\left(\bo{G} \tilde{\bo{u}} +\Eb x +\gb\right)_j\}-\ec}\nonumber\\
&\leq 2\mathcal{R}_{\mathrm{d}} \qquad \forall x \in X_N,
\end{align}
where in the last inequality we used \eqref{choice_ec} and the fact
that both $\tilde{\lambda}$ and $\ec$ are nonnegative.  Taking into
account that $\set{\bu : \bo{G}\bu+\bo{E}x+\gb+\ec\eb \leq 0}
\subseteq \set{\bu: \bo{G}\bu+\bo{E}x+\gb \leq 0}$ we have:
\begin{equation}
\label{eq_lower_bound_fstar} F^*_{\ec}(x) \geq F^*(x) \qquad \forall
x \in X_N.
\end{equation}
On the other hand, from  the dual formulation of the tightened
problem \eqref{prob_tight} we have:
\begin{align}
\label{eq_upper_bound_fstar} F_{\ec}^*(x)&=\min_{\bu \in \bo{U}}
F(x,\bu)+\iprod{\lambda_{\ec}^*,\bo{G}\bu+\bo{E}x+\gb+\ec\eb}
\nonumber\\
&=\min_{\bu \in \bo{U}}
F(x,\bu)+\iprod{\lambda_{\ec}^*,\bo{G}\bu+\bo{E}x+\gb}+\iprod{\lambda_{\ec}^*,\ec\eb} \\
&\leq \max_{\lambda \geq 0} \min_{\bu \in \bo{U}}
F(x,\bu)+\iprod{\lambda,\bo{G}\bu+\bo{E}x+\gb} +
\sqrt{p}\ec\|\lambda_{\ec}^*\| \leq
F^*(x)+2\sqrt{p}\mathcal{R}_{\mathrm{d}}\ec. \nonumber
\end{align}

We will further see how we can use relations
\eqref{eq_lower_bound_fstar} and \eqref{eq_upper_bound_fstar} to
recover the primal suboptimality for the original problem
\eqref{prob_princc} from the suboptimality of the tightened problem
\eqref{prob_tight}, based on the results from Section
\ref{sec_mainsec}. We now discuss the suboptimality and the
feasibility of the MPC scheme based on the algorithms \textbf{(IDG)}
and \textbf{(IDFG)}.

For the algorithm \textbf{(IDG)} we assume that the outer accuracy
$\eo$ is chosen such that:
\begin{equation*}
\eo \leq \left(\sqrt{p}+0.05\right)\mathcal{R}_{\mathrm{d}}\min
\limits_{j=1,\dots,p}
 \{-\left(\bo{G} \tilde{\bo{u}} +\Eb x +\gb\right)_j\}.
\end{equation*}
Based on the results stated in Section \ref{sec_dg} and relations
\eqref{eq_lower_bound_fstar} and \eqref{eq_upper_bound_fstar} we can
choose, for example, the following values for the number of outer
iterations $k_{\mathrm{out}}$, the inner accuracy $\ei$ and also for
the tightening parameter $\ec$:
\begin{align}
\label{kmpcg} &k_{\mathrm{out}}=\left\lfloor
\frac{10\left(2\sqrt{p}+0.1\right)L_{\mathrm{d}}\mathcal{R}_{\mathrm{d}}^2}{\eo}\right\rfloor\\
&\ei=\frac{\eo}{20\left(2\sqrt{p}+0.1\right)},~~~\ec=\frac{\eo}{\left(2\sqrt{p}+0.1\right)\mathcal{R}_{\mathrm{d}}}.
\nonumber
\end{align}
Using the previous choices for $k_{\mathrm{out}}$, $\ei$ and $\ec$
in Theorem \ref{theorem_fesa_grad} we have:
\begin{equation*}
\left\| \left[
\bo{G}\hat{\bu}^{k_\mathrm{out}}+\bo{E}x+\gb+\ec\eb\right]_+\right\|
\leq
\frac{8L_{\mathrm{d}}\mathcal{R}_{\mathrm{d}}}{k_{\mathrm{out}}+1}+2\sqrt{\frac{L_{\mathrm{d}}}{k_{\mathrm{out}}+1}\ei}
< \ec,
\end{equation*}
which implies that for all $j=1, \dots, p$, we can write:
\begin{equation*}
\left[ (\bo{G}\hat{\bu}^{k_\mathrm{out}} + \bo{E} x + \gb + \ec)_j
\right]_+ < \ec.
\end{equation*}
Since $ (\bo{G} \hat{\bu}^{k_\mathrm{out}} + \bo{E} x + \gb +\ec)_j
\leq \left[ ( \bo{G} \hat{\bu}^{k_\mathrm{out}} + \bo{E} x + \gb +
\ec)_j \right]_+$ we have that $ \hat{\bu}^{k_\mathrm{out}} \in
\bo{U}$ and $\bo{G}\hat{\bu}^{k_\mathrm{out}}+\bo{E}x+\gb  < 0$ and
thus algorithm \textbf{(IDG)} guarantees feasibility of the primal
variable $\hat{\bu}^{k_{\mathrm{out}}}$. Further, using now Theorem
\ref{theorem_primal_grad} together with \eqref{eq_lower_bound_fstar}
and \eqref{eq_upper_bound_fstar} we have that $
-\frac{\eo}{\sqrt{p}}\leq F(x,\hat{\bu}^{k_\mathrm{out}})-F^*(x)\leq
\eo$ and since $\hat{\bu}^{k_{\mathrm{out}}}$ is feasible, we get:
\begin{equation*}
0  \leq  F(x,\hat{\bu}^{k_\mathrm{out}})-F^*(x)\leq \eo
\end{equation*}
and thus  the MPC scheme based on algorithm \textbf{(IDG)} is also
$\eo$-suboptimal.

In order to prove the suboptimality and feasibility of the MPC
scheme based on algorithm \textbf{(IDFG)} we proceed in a similar
way as for algorithm \textbf{(IDG)}. We assume that the outer
accuracy $\eo$ is chosen such that:
\begin{equation*}
\eo \leq \left(\sqrt{p}+0.5\right)\mathcal{R}_{\mathrm{d}}\min
\limits_{j=1,\dots,p}
 \{-\left(\bo{G} \tilde{\bo{u}} +\Eb x +\gb\right)_j\}.
\end{equation*}
Based on the convergence properties of algorithm \textbf{(IDFG)}
presented in Section \ref{sec_dfg} and  relations
\eqref{eq_lower_bound_fstar} and \eqref{eq_upper_bound_fstar} we can
choose:
\begin{align}
\label{kmpcfg} &k_{\mathrm{out}}=\left\lfloor
8\sqrt{\frac{\left(2\sqrt{p}+1\right)L_{\mathrm{d}}\mathcal{R}_{\mathrm{d}}^2}{\eo}}\right\rfloor\\
&\ei=\frac{\eo\sqrt{\eo}}{8\sqrt{2}\sqrt{L_{\mathrm{d}}}\mathcal{R}_{\mathrm{d}}\left(2\sqrt{p}+1\right)^{\frac{3}{2}}},
~~~\ec=\frac{\eo}{\left(2\sqrt{p}+1\right)\mathcal{R}_{\mathrm{d}}}.
\nonumber
\end{align}
The $\eo$-suboptimality and feasibility of the MPC scheme based on
algorithm \textbf{(IDFG)} can be proved now in a similar way as the
one for algorithm \textbf{(IDG)} using Theorems
\ref{theorem_primal_fesa} and \ref{theorem_primal_optim}, i.e.:
\begin{align*}
& 0  \leq
F(x,\hat{\bu}^{k_\mathrm{out}})-F^*(x)\leq \eo~~\mathrm{and}\\
&  \hat{\bu}^{k_\mathrm{out}} \in \bo{U}, \quad
\bo{G}\hat{\bu}^{k_\mathrm{out}}+\bo{E}x+\gb < 0.
\end{align*}
In conclusion, in our MPC scheme from our suboptimal and feasible
control sequence $\hat{\bu}^{k_\mathrm{out}}$ only the first input
$\hat{\bu}^{k_\mathrm{out}}(0)$ is applied to the system according
to the receding horizon strategy.


\subsection{Stability of the MPC scheme}
\label{subsec_stab}

For stability analysis, we express for the entire network system the
dynamics, the matrices corresponding to the total stage and final
costs, and the total  terminal set as: $x(t+1) = A x(t) + B u(t), Q,
R, P$ and $X^\mathrm{f}$, respectively. Further,  the next state in
our MPC scheme is denoted  $x^+=Ax+B \hat{\bu}^{k_\mathrm{out}}(0)$
and a new sequence of feasible inputs for the MPC problem at the
next state $x^+$ is denoted with $\tilde \bu^+=\left[
\left(\hat{\bu}^{k_\mathrm{out}}(1)\right)^T \dots
\left(\hat{\bu}^{k_\mathrm{out}}(N-1)\right)^T
\left(Kx(N)\right)^T\right]^T$, where $u= K x$ is a linear feedback
controller. In this section we will make use of the following
assumptions:
\begin{assumption}
\label{as_invariant} (i) The terminal constraint set $X^\mathrm{f}$
is positively invariant for the closed-loop system $x(t+1)= (A + B
K)x(t)$,  i.e.  for all $x \in \mathrm{int}(X^\mathrm{f})$ we have
that $(A+BK)x \in \mathrm{int}(X^\mathrm{f})$.

(ii) The following relation holds:
\begin{equation}
\label{eq_lyap} F(x^{+},\tilde \bu^+)\leq
F(x,\hat{\bu}^{k_\mathrm{out}}) - \|x\|_Q^2 \quad \forall x \in X_N.
\end{equation}
\end{assumption}
Assumption \ref{as_invariant} is standard in the  the MPC framework
(see also \cite{ScoMay:99, DoaKev:11}). Moreover, distributed
synthesis procedures for finding the matrices $K$ an $P$ for the
terminal controller and terminal cost such that  Assumption
\ref{as_invariant} holds can be found e.g. in \cite{NecCli:12}.

Based on Assumption \ref{as_invariant} (i) and the fact that
$\bo{G}\hat{\bu}^{k_\mathrm{out}}+\bo{E}x+\gb < 0$ we can
immediately see that $\tilde \bu^+$ is a strict Slater vector of the
MPC problem \eqref{prob_princc} with initial state $x^+$. Therefore,
in the MPC problem for the next state $x^+$ we update the strict
Slater vector as explained above, i.e.: \[ \tilde \bu^+=\left[
\left(\hat{\bu}^{k_\mathrm{out}}(1)\right)^T \dots
\left(\hat{\bu}^{k_\mathrm{out}}(N\!-\!1)\right)^T
\left(Kx(N)\right)^T\right]^T \] and thus  $\tilde \bu^+$ is also
feasible for  tightened problem~\eqref{prob_tight}.

In order to prove asymptotic stability of the MPC scheme for all $x
\in X_N$ we use similar arguments as in \cite{ScoMay:99, DoaKev:11}
by   showing that $F(x,\hat{\bu}^{k_\mathrm{out}})$ is a Lyapunov
function:
\begin{align*}
F(x^+, (\hat{\bu}^{k_\mathrm{out}})^+) &\leq F^*(x^+) + \eo^+  \leq
F_{\ec}^*(x^+) + \eo^+ \leq F(x^+, \tilde \bu^+) + \eo^+ \\
&\overset{\eqref{eq_lyap}}{ \leq } F(x,\hat{\bu}^{k_\mathrm{out}}) -
\|x\|_{Q}^2 + \eo^+,
\end{align*}
where $\eo^+$ denotes the outer accuracy for solving MPC problem
\eqref{prob_tight} at initial state $x^+$.  From the previous
discussion we have that choosing e.g.
\begin{align}
\label{eoutp} \eo^+ \leq \min \left\{ \frac{1}{2} \|x\|_{Q}^2, \;
c(p) \min \limits_{j=1,\dots,p}
 \{-\left(\bo{G} \tilde \bu^+ +\Eb x^+ +\gb\right)_j\}
\right\},
\end{align}
 we get asymptotic stability of  the closed-loop  system.
Here, $c(p) = \sqrt{p}+0.05$ for algorithm \textbf{(IDG)} and
$c(p)=\sqrt{p}+0.5$ for \textbf{(IDFG)}.


\subsection{Distributed implementation}
\label{subsec_distributed_implementation}

\noindent In this section we discuss some technical aspects for the
distributed implementation of our inexact dual decomposition methods
in the case of MPC problem \eqref{mod5} and its equivalent
form~\eqref{prob_princc}.

 Usually, for the dynamics
\eqref{mod3} the corresponding matrices $\bo{H}$ and $\Gb$ obtained
after eliminating the states  are dense and despite the fact that
algorithms \textbf{(IDG)}, \textbf{(IDFG)} and \textbf{(PCD)} can
perform parallel computations (i.e. each subsystem needs to solve
small local problems) we need communication between $N$ steps
neighborhood subsystems \cite{CamSch:11,DoaKev:11}. However, for the
dynamics \eqref{simp1} the corresponding matrices $\bo{H}$ and $\Gb$
are sparse and in this case in our algorithms \textbf{(IDG)},
\textbf{(IDFG)} and \textbf{(PCD)} we can perform distributed
computations (i.e. the subsystems solve small local problems in
parallel and they need to communicate only with one neighborhood
subsystems as detailed below). Indeed, if the dynamics of the
subsystems are given by \eqref{simp1}, then $x_i(t) = A_{ii}^t
x_i(0) + \sum_{l=1}^t \sum _{j \in \mathcal {N}^{i}} A_{ii}^{l-1}
B_{ij} u_j(t-l)$ and thus the matrices $\bo{H}$ and $\bo{G}$ have a
sparse structure (see e.g. \cite{CamSch:11,SteVen:10}). In
particular,  the complicating constraints have the following
structure: for matrix $\Gb$ the $(i,j)$ block matrices of $\Gb$,
denoted $\Gb_{ij}$, are zero for all $j \notin \mathcal {N}^{i}$ for
a given subsystem $i$, while the matrix $\Eb$ is block diagonal.
Further, if we define the neighborhood subsystems of a certain
subsystem $i$ as $ \hat {\mathcal N}^i = {\mathcal N}^i \cup \{l: \;
l \in {\mathcal N}^j, j \in \bar {\mathcal N}^i\}$, where $\bar
{\mathcal N}^i = \{ j: \; i \in  {\mathcal N}^j \}$, then the matrix
$\bo{H}$ has all the block matrices $\bo{H}_{ij} = 0$ for all $j
\notin  \hat {\mathcal N}^i$ and the matrix $\Wb$ has all the block
matrices $\Wb_{ij} = 0$ for all $j \notin  \bar {\mathcal N}^i$, for
any given subsystem $i$. Thus, the $i$th block components of both
$\bar{\nabla} d_{\ec}$ and $\nabla L_{\ec}(\bo{u},\lambda)$ can be
computed using only local information, i.e. each subsystem
$i=1,\dots,M$ does the following synchronous computations:
\begin{align}
& \bar{\nabla}_i d_{\ec}(\lambda)= \sum_{j \in \mathcal {N}^{i}}
\Gb_{ij} {\bo{u}}_j + \Eb_{ii} x_i + \gb_i + \ec \eb \label{grad_d}\\
 &  \nabla_i L_{\ec}(\bo{u},\lambda) = \sum_{j \in \hat {\mathcal N}^{i}}
\bo{H}_{ij} \bo{u}_j +  \sum_{j \in \bar {\mathcal N}^{i}} \left(
\bo{W}_{ij} x_j + \Gb_{ji}^T \lambda_j \right) + \bo{w}_i.
\label{grad_L}
\end{align}
Note that in the algorithm \textbf{(PCD)} the only parameters that
we need to compute are the  Lipschitz constants $L_i$. However, in
the MPC problems, $L_i$ does not depend on the initial state $x$ and
can be computed once, offline, locally by each subsystem $i$ as:
$L_i = \lambda_{\max}(\bo{H}_{ii})$. From the previous discussion it
follows immediately that each subsystem $i$ performs  the inner
iterations of algorithm \textbf{(PCD)}  in parallel using
distributed computations (see \eqref{grad_L}) for all $x \in X_N$.

Since the algorithms \textbf{(IDG)} and \textbf{(IDFG)} use only
first order information, we can observe that once $\bar{\nabla}_i
d_{\ec}(\lambda)$ has been computed distributively, as proved in
\eqref{grad_d}, all the computations for updating the block
component corresponding to subsystem  $i$ in $\lambda^k$ or ${\hat
\lambda}^k$ can be done in parallel due to the fact that we have to
do only vector operations. However, in these schemes all subsystems
need to know the global Lipschitz constant $L_{\text{d}} =
\frac{\|\bo{G}\|^2}{\lambda_{\text{min}}(\bo{H})}$ that usually is
difficult to be computed distributively. In practice, a good upper
bound on $L_{\text{d}}$ is sufficient, e.g. $ L_{\text{d}} \leq
\frac{\|\bo{G}\|^2_F}{\min_i \lambda_{\text{min}}(R_i)}$, where
recall that $\| \cdot \|_F$ denotes the Frobenius norm. Note that
$L_{\text{d}}$ does not depend on $x$ and can be computed offline,
before starting the MPC scheme.

In both algorithms \textbf{(IDG)} and \textbf{(IDFG)}, another
global constant that has to be updated is the upper bound on the
norm of the optimal multiplier, $\mathcal{R}_{\mathrm{d}}$. Based on
the theory developed in the previous sections, after some long but
straightforward computations an easily computed upper bound for the
next $\mathcal{R}_{\mathrm{d}}^+$ corresponding to the MPC problem
with initial state $x^+$ is given by:
\begin{align}
\label{Rd}
\mathcal{R}_{\mathrm{d}}^+ \leq \frac{\ec \langle
\hat{\lambda}^{k_\mathrm{out}}, e \rangle + 4 \eo - \|x\|_Q^2 }{\min
\limits_{j=1,\dots,M} \{ - (\bo{G} \tilde \bu^+ + \bo{E} x^+ + g)_j
\}}.
\end{align}
 Note that these upper bounds on $ L_{\text{d}}$ and
$\mathcal{R}_{\mathrm{d}}^+$ can be computed distributively in an
efficient way.

From the previous discussion we can conclude that the sequences
 $\lambda_k$, ${\hat \lambda}_k$ and $\mathbf{ \bar u}_k$, generated by the
algorithms \textbf{(IDG)}/\textbf{(IDFG)} and \textbf{(PCD)} can
be computed in parallel and distributively provided that good
estimates for $L_{\text{d}}$ and $\mathcal{R}_{\mathrm{d}}$ are
known by each subsystem. The effects of the upper bound for
$\mathcal{R}_{\mathrm{d}}$ on the overall performance of the MPC
scheme  are discussed in Sections \ref{test2}.

\section{Numerical tests}
\label{sec_numerical}

In order to certify the efficiency of the proposed algorithms, we
consider different numerical scenarios. We first analyze the
behavior of algorithms \textbf{(IDG)}, \textbf{(IDFG)} and
\textbf{(PCD)} on randomly generated QP problems and then we
compare our algorithms with other QP solvers used in the context
of distributed MPC.  The algorithms were implemented on a PC, with
2 Intel Xeon E5310 CPUs at 1.60 GHz and 4Gb of RAM.

\subsection{Practical behavior of newly developed algorithms
\textbf{(IDG)}, \textbf{(IDFG) and \textbf{(PCD)}} }
\label{test1}

We consider random QP problems of the form:
\begin{equation}
\label{testqp} F^*=\min \limits_{\mathrm{lb} \leq \bu \leq
\mathrm{ub}, G \bu +g \leq 0} F(\bu)  \quad (=0.5 \bu^T H \bu + w^T
\bu),
\end{equation}
where matrices $H \in \rset^{n \times n}$ and $G \in \rset^{2n
\times n}$ are taken from a normal distribution with zero mean and
unit variance. Matrix $H$ is then made positive definite by the
transformation $H \leftarrow H^T H + I_n$. Further, $\mathrm{ub} = -
\mathrm{lb} = 1$ and $w, g$ are taken from a uniform distribution.
For different QP dimensions ranging from $n = 100$ to $n = 1000$, we
first  analyze the behavior of algorithms \textbf{(IDG)} and
\textbf{(IDFG)} in terms of the parameters choice.

For each $n$, we consider two different estimates for the number of
outer iterations depending on the way we compute  $R_\mathrm{d} = \|
\lambda^*\|$, where
 $\lambda^*$ is an  optimal Lagrange multiplier. For algorithm
\textbf{(IDG)}, $k_{\mathrm{out}}^G$ is the average number of
iterations obtained using the bound $\mathcal{R}_\mathrm{d}$ given
in  \eqref{eq_upper_norm_mult} - Section \ref{sec_dg}, while
$k_{\mathrm{out,samp}}^G$ is the average number of iterations
obtained with $R_\mathrm{d} = \| \lambda^*\|$, where $\lambda^*$ is
computed exactly using Matlab's \texttt{Quadprog}, iterations which
correspond to $10$ random  QP problems. We also compute the average
number of outer iterations $k_{\mathrm{out,real}}^G$ observed in
practice, obtained by imposing the stopping criteria $|F(\hat
\bu^{k_{\mathrm{out,real}}^G}) - F^{*}|$ and $\|\left[G \hat
\bu^{k_{\mathrm{out,real}}^G} +g\right]_+\|$  to be less than the
estimates established in Section \ref{sec_dg} for an outer accuracy
$\eo = 10^{-3}$. Using the results from Section \ref{sec_dfg} we
compute in a similar way $k_{\mathrm{out}}^{FG}$,
$k_{\mathrm{out,samp}}^{FG}$ and $k_{\mathrm{out,real}}^{FG}$ for
algorithm \textbf{(IDFG)}. The results for both algorithms are
presented in Figure \ref{fig:outer_iterations_random_qp}.
\begin{figure}[h!]
\centering\includegraphics[angle=0,height=5cm,width=14cm]{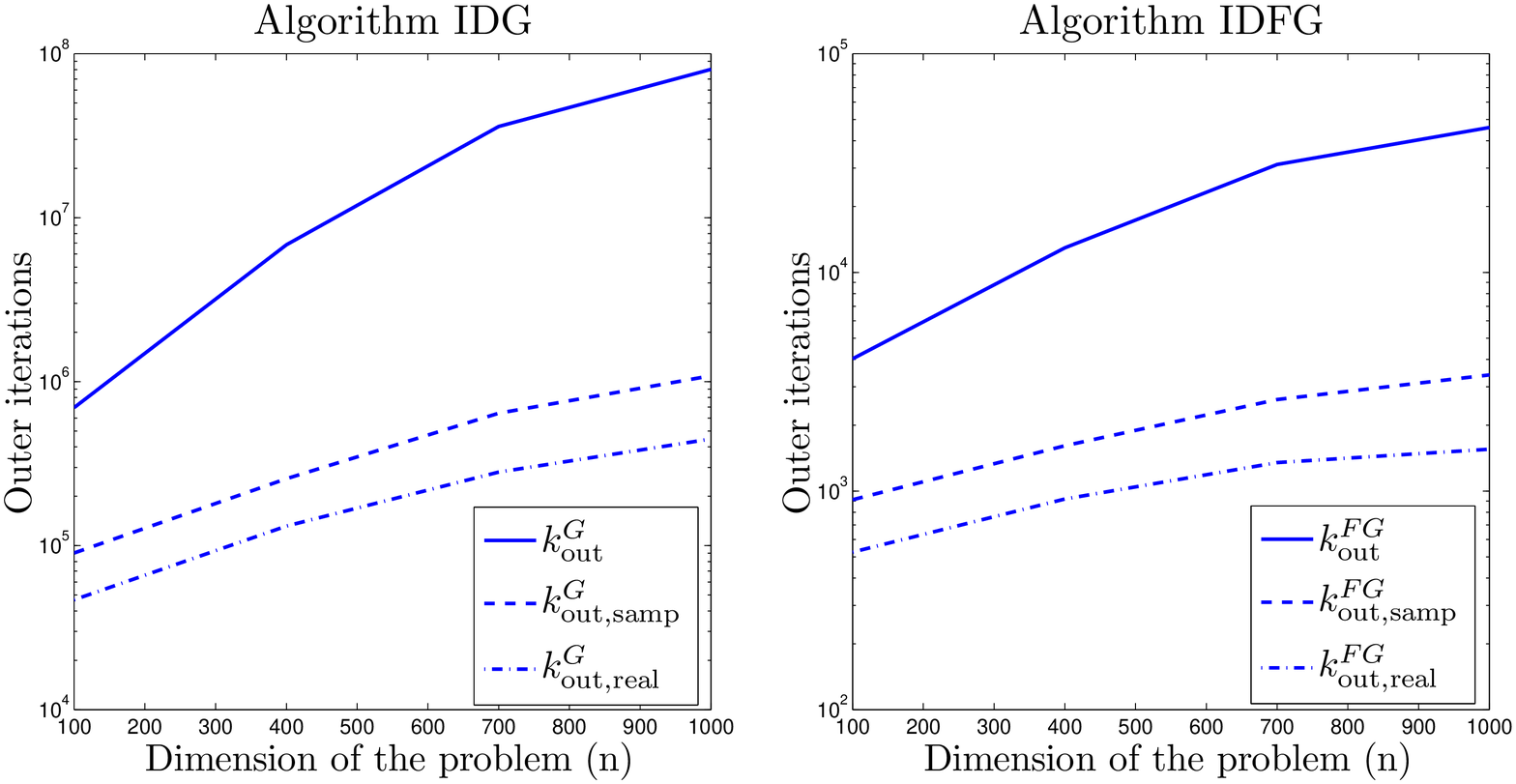}
\caption{Values of $k_{\mathrm{out}}^s$, $k_{\mathrm{out,samp}}^s$
and $k_{\mathrm{out,real}}^s$ ($s=\set{G;FG}$) for algorithms
\textbf{(IDG)} (left) and  \textbf{(IDFG)} (right),
$\eo=10^{-3}$.} \label{fig:outer_iterations_random_qp}
\end{figure}
We can observe that in practice algorithm \textbf{(IDFG)} performs
much better than algorithm \textbf{(IDG)}. Note that the expected
number of outer iterations
 $k_{\mathrm{out,samp}}^G$ and $k_{\mathrm{out,samp}}^{FG}$ obtained
from our derived bounds in Sections \ref{sec_dg} and \ref{sec_dfg}
offer a good approximation for the real number of iterations of the
two algorithms. Thus, these simulations show that our derived bounds
are tight. But, when in our derived estimates we use
$\mathcal{R}_\mathrm{d} $, then $k_{\mathrm{out}}^{FG}$ is about one
order of magnitude, while $k_{\mathrm{out}}^G$ is about two orders
of magnitude greater than the real number of iterations.

\begin{figure}[h!]
\vspace{-0.1cm}
\centering\includegraphics[angle=0,height=7cm,width=14cm]{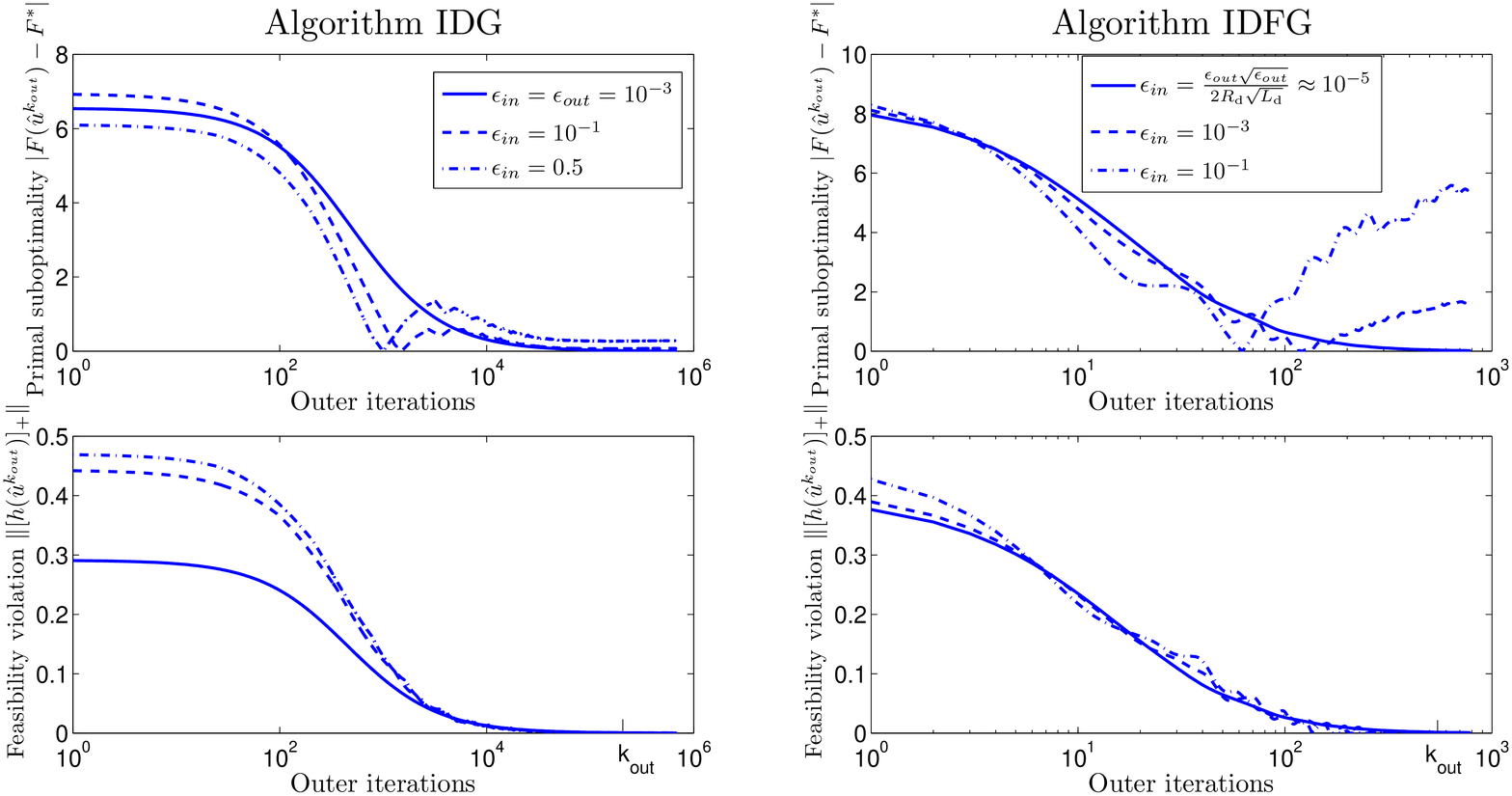}
\caption{Suboptimality and feasibility violation for algorithms
\textbf{(IDG)} (left) and  \textbf{(IDFG)} (right) for
$\eo=10^{-3}$ and different~$\ei$.}
\label{fig:inner_influence_random_qp}
\end{figure}

Since the estimates for suboptimality and feasibility violation are
also dependent on the way the inner accuracy $\ei$ is chosen, we are
also interested in the behavior of the two algorithms w.r.t. $\ei$.
For this purpose, we apply algorithms \textbf{(IDG)} and
\textbf{(IDFG)} for solving a random QP problem of dimension
$n=300$, with a fixed outer accuracy $\eo = 10^{-3}$ and different
values of $\ei$. In Figure \ref{fig:inner_influence_random_qp} we
plot the primal suboptimality and the feasibility violation by
letting the two algorithms perform the number of outer iterations
computed in Sections \ref{sec_dg} and \ref{sec_dfg}. We can observe
from Figure \ref{fig:inner_influence_random_qp} that if the inner
accuracy $\ei$ is chosen too large, the desired level of
suboptimality cannot be attained. We can also see that algorithm
\textbf{(IDG)} is less sensitive to the choice of the inner accuracy
$\ei$ than algorithm \textbf{(IDFG)} due to the fact that algorithm
\textbf{(IDFG)} accumulates errors (see Theorems
\ref{theorem_primal_grad} and \ref{theorem_primal_optim}).

In conclusion, we notice from the results of Sections \ref{sec_dg}
and \ref{sec_dfg} and  simulations that there is a tradeoff between
the speed of convergence and robustness: e.g. algorithm
\textbf{(IDFG)} is  faster than algorithm \textbf{(IDG)}, but the
second one is more robust since it does not accumulate the errors.
Thus, depending on the application, one can choose between the two
algorithms.

\begin{table}[h]
\centering \scriptsize
\begin{tabular}{|c|c|c|c|c|c|c|c|} \hline
 \multicolumn{2}{|c|}{}  & \multicolumn{2}{|c|}{ \textbf{(PCD)} } & \multicolumn{2}{|c|}{Jacobi } & \textbf{(PCD)}  & Quadprog \\
  \multicolumn{2}{|c|}{}  &     \multicolumn{2}{|c|}{}   &
  \multicolumn{2}{|c|}{\cite{SteVen:10}}      & centralized     &          \\ \hline
 M & n & CPU (sec) & Iter & CPU (sec) & Iter & CPU (sec) & CPU (sec)  \\ \hline
 &100 &  0.09  & 262 & 0.38 & 82 & 0.16 & 0.08\\
10  & 500 & 0.61 & 1244 & 2.12 & 715 & 0.75 & 1.27 \\
  & 800 & 2.11 & 2702 & 19.3 & 1274 & 9.3 & 3.8 \\
 & 1000 & 2.69  & 2851   &   23.05    &  1375     & 10.1   &  4.1   \\ \hline
\end{tabular}
\caption{ CPU  time (seconds) and number of inner iterations for
algorithms \textbf{(PCD)} and Jacobi \cite{SteVen:10}.  }
\label{numerical_table}
\end{table}

We also implemented for comparison, algorithms \textbf{(PCD)} and
Jacobi from \cite{SteVen:10}. Both algorithms were implemented in C
code, with parallelization ensured via MPI.  Table
\ref{numerical_table} presents the average CPU time in seconds and
number of iterations for  each algorithm for $10$ random QP problems
\eqref{testqp} with only box constraints.   Since the convergence
rate for algorithm in \cite{SteVen:10}  is not known, the stopping
criterion for each algorithm is $F(\bu^k) - F^* \leq 10^{-3}$, with
$F^*$ being precomputed  using \texttt{Quadprog}. As we can see
algorithm \textbf{(PCD)} is about $10$ times faster than the
algorithm~in~\cite{SteVen:10}.  


\subsection{ MPC for traffic networks}
\label{test2}

In this section we analyze the behavior of algorithms \textbf{(IDG)}
and  \textbf{(IDFG)}  on MPC problems for traffic network systems.
In \cite{CamOli:09} the authors show that traffic network systems
can be modeled in the form \eqref{simp1}. We generated ring traffic
networks with $M$ even junctions (subsystems) and having $M/2$ input
links and $M/2$ output links distributed randomly. In order to work
with small costs, we normalized the state of the system as: $x
\leftarrow x/10^3$. For the parameters of the system and of the MPC
problem see \cite{CamOli:09} and the references therein. Note that
the number of states or inputs in this traffic network  is $3M/2$.

\begin{table}[h!]
\centering \scriptsize
\begin{tabular}{|c|c|c|c|}
\hline Avg. no. of iter. &  $M=6$   &   $M=12$   &    $M =18$ \\
\hline
$k_{\mathrm{out}}^{FG}$  & $194$ & $327$ & $443$\\
\hline

$k_{\mathrm{out}}^G$  & $9452$ & $26734$ & $49113$\\
\hline

$k_{\mathrm{out}}^{SG}$  & $4.9 \cdot 10^{5}$ & $9.9 \cdot 10^{5}$ & $1.5 \cdot 10^{6}$\\
\hline
\hline

$k_{\mathrm{out,real}}^{FG}/F(x,{\hat
\bu}^{k_{\mathrm{out,real}}^{FG}})-F^*(x)$ & $57/1.3 \cdot 10^{-4} $  &  $71/2.7 \cdot 10^{-4}$ & $89/3.8 \cdot 10^{-4}$\\
\hline

$k_{\mathrm{out,real}}^G/F(x,{\hat
\bu}^{k_{\mathrm{out,real}}^G})-F^*(x) $ & $726/1.8 \cdot 10^{-4}$ &
$1289/2.5 \cdot 10^{-4}$  & $1836/3.4 \cdot 10^{-4}$
\\ \hline

$k_{\mathrm{out,real}}^{SG}$ &  $2\cdot 10^4$ &  $2\cdot 10^4$ &  $2\cdot 10^4$   \\

$|F(x,{\hat \bu}^{k_{\mathrm{out,real}}^{SG}})-F^*(x)|$  &  $9.8
\cdot
10^{-3}$  &  $5.8 \cdot 10^{-2}$ & $8.2 \cdot 10^{-2}$ \\

$\max_j \{(\Gb {\hat \bu}^{k_{\mathrm{out,real}}^{SG}} + \Eb x +
\gb)_j
\}$ & $1.2 \cdot 10^{-4}$ & $5.6 \cdot 10^{-4}$ &  $2.4 \cdot 10^{-3}$ \\
\hline
\end{tabular}
\caption{Averaged number of iterations and cost decrease for
different number of junctions (subsystems).} \label{table_traffic}
\end{table}

The distributed  MPC approach with a prediction horizon of $N=10$
steps  was applied  for solving  a single time  step of   the
traffic network with  $M \in \{6, 12, 18\}$  number of junctions
using  the newly developed algorithms \textbf{(IDG)} and
\textbf{(IDFG)} and the  dual subgradient algorithm  in
\cite{DoaKev:11}.  For each $M$ the results are shown for a set of
$10$ initial states obtained at random. Additionally to the input
constraints considered in \cite{CamOli:09} we also assume box
constraints on the states. We solve the tightened problem
\eqref{prob_tight}, obtained from the MPC problem of form
\eqref{mod5} or equivalently \eqref{prob_princc}, with an outer
accuracy $\eo=10^{-2}$.  In Table \ref{table_traffic} we report the
average number of outer iterations $k_{\mathrm{out}}^{FG}$,
$k_{\mathrm{out}}^G$ and $k_{\mathrm{out}}^{SG}$ performed by the
algorithms \textbf{(IDFG)}, \textbf{(IDG)} and the algorithm in
\cite{DoaKev:11}, respectively.
  We also count the average real number of iterations performed by
algorithms  \textbf{(IDFG)} and \textbf{(IDG)} by imposing the
stopping criterion $F(x,{\hat \bu}^{k_{\mathrm{out}}})-F^* \leq \eo$
and $\Gb {\hat \bu}^{k_{\mathrm{out}}} +\Eb x +\bo{g} \leq 0$. For
the dual subgradient algorithm in \cite{DoaKev:11} the stopping
criterion was chosen as follows: $F(x,{\hat
\bu}^{k_{\mathrm{out}}})-F^* \leq \eo^{SG}$ and $\Gb {\hat
\bu}^{k_{\mathrm{out}}} +\Eb x +\bo{g} \leq 0$, where $\eo^{SG}$ and
the rest of the parameters for this algorithm are computed  as in
\cite[Section III.C]{DoaKev:11}. In all three algorithms the inner
problems were solved with algorithm \textbf{(PCD)}. From Table
\ref{table_traffic} we observe that algorithm  \textbf{(IDFG)} has
the best behavior compared to \textbf{(IDG)} and the dual
subgradient  algorithm in \cite{DoaKev:11}. Thus, algorithm
\textbf{(IDFG)} is superior in terms of both, predicted
(theoretical) and real number of iterations (e.g. from $10$ to $100$
times faster than \textbf{(IDG)}). Further, algorithm \textbf{(IDG)}
is able to produce a feasible and suboptimal solution in a
reasonable number of
 outer iterations, while the dual subgradient algorithm in \cite{DoaKev:11}
failed to generate a feasible solution within $2 \cdot 10^4$ outer
iterations. We observed that this behavior   is due mainly to the
fact that the step size in \textbf{(IDG)} is larger than that in
\cite{DoaKev:11}.

\section{Conclusions}
\label{sec_conclusion} Motivated by  MPC problems for complex
interconnected systems, we have proposed two dual based methods for
solving large-scale smooth convex optimization problems with
coupling constraints. We  moved the coupling constraints into the
cost using duality theory. We  solved the inner subproblems only up
to a certain accuracy by means of a parallel coordinate descent
method for which we have proved linear convergence. For solving the
outer problems, we  developed inexact dual gradient and fast
gradient schemes for which we provide a full convergence analysis,
deriving upper bounds on dual and primal suboptimality and primal
feasibility violation. We also discussed some implementation issues
of the new algorithms for distributed MPC problems and tested them
on several practical applications.


\section*{Acknowledgment}
The research leading to these results has received funding from: the
European Union (FP7/2007--2013) under grant agreement no 248940;
CNCS (project TE-231, 19/11.08.2010); ANCS (project PN II,
80EU/2010); POSDRU/89/1.5/S/62557 and POSDRU/107/1.5/S/76909. \\
The authors thank Y. Nesterov,  D. Doan and T. Keviczky for
interesting discussions.

\section*{Appendix}

{\em Proof of Lemma \ref{technical_Lipschitz}}.\\
{\em Case 1} -  We first consider the unconstrained case, i.e.
$\bo{U}=\rset^n$. Since $F$ is strongly convex, it follows that
$\bo{u}(\lambda)$ is unique and thus $d$ is a differentiable
function having the gradient:
\begin{align*}
\nabla d(\lambda)&=\nabla \bo{u}(\lambda)^T\nabla
F(\bo{u}(\lambda))+h(\bo{u}(\lambda))+\nabla
\bo{u}(\lambda)^T\nabla h(\bo{u}(\lambda))^T \lambda\\
&=\nabla \bo{u}(\lambda)^T\left[\nabla F(\bo{u}(\lambda))+\nabla
h(\bo{u}(\lambda))^T
\lambda\right]+h(\bo{u}(\lambda))=h(\bo{u}(\lambda)),
\end{align*}
where the last equality is obtained using the optimality conditions
for $\bo{u}(\lambda)$, i.e.:
\begin{equation}
\label{eq_optimality_lipschitz} \nabla F(\bo{u}(\lambda))+ \nabla
h(\bo{u}(\lambda))^T \lambda=0 ~~\forall \lambda \geq 0.
\end{equation}

Taking now into account that the components of $h$ are twice
differentiable we have:
\begin{equation}
\label{eq_hess_dual} \nabla^2 d(\lambda)=\nabla
h(\bo{u}(\lambda))\nabla \bo{u}(\lambda).
\end{equation}
Differentiating now the optimality conditions
\eqref{eq_optimality_lipschitz} w.r.t. to $\lambda$ we can write:
\begin{align*}
\nabla \bo{u}(\lambda)^T \nabla^2 F(\bo{u}(\lambda))+\nabla
h(\bo{u}(\lambda))+\nabla \bo{u}(\lambda)^T \sum_{i=1}^p \lambda_i
\nabla^2 h_i(\bo{u}(\lambda))=0,
\end{align*}
from which we obtain:
\begin{equation*}
\nabla \bo{u}(\lambda)^T=-\nabla h(\bo{u}(\lambda))\left[\nabla^2
F(\bo{u}(\lambda))+\sum_{i=1}^p \lambda_i \nabla^2
h_i(\bo{u}(\lambda))\right]^{-1}.
\end{equation*}
Introducing this relation into \eqref{eq_hess_dual} and taking into
account that $\sum_{i=1}^p \lambda_i \nabla^2
h_i(\bo{u}(\lambda))\succeq 0$ we have:
\begin{align*}
-\nabla^2 d(\lambda)&=\nabla h(\bo{u}(\lambda))\left[\nabla^2
F(\bo{u}(\lambda))+\sum_{i=1}^p \lambda_i \nabla^2
h_i(\bo{u}(\lambda))\right]^{-1}\nabla h(\bo{u}(\lambda))^T\\
& \preceq \nabla h(\bo{u}(\lambda))\left[\nabla^2
F(\bo{u}(\lambda))\right]^{-1}\nabla h(\bo{u}(\lambda))^T.
\end{align*}
Since $F$ is $\sigma_{\mathrm{F}}$-strongly convex and thus
$\nabla^2 F(\bo{u}(\lambda)) \succeq \sigma_{\mathrm{F}}I_n$ and the
Jacobian of $h$ is bounded (see Assumption
\eqref{as_strong_lipschitz}), we can write further:
\begin{equation*}
\|\nabla^2 d(\lambda)\| \!\leq\! \|\left[\nabla^2
F(\bo{u}(\lambda))\right]^{-1}\!\| \cdot  \|\nabla
h(\bo{u}(\lambda))\|^2  \!\leq\! \|\left[\nabla^2
F(\bo{u}(\lambda))\right]^{-1}\!\| \cdot  \|\nabla
h(\bo{u}(\lambda))\|^2_F  \leq\!\!
\frac{c_{\mathrm{h}}^2}{\sigma_{\mathrm{F}}}.
\end{equation*}
 Thus, we can conclude using Lemma 1.2.2
from \cite{Nes:04} that
$L_{\mathrm{d}}=\frac{c_{\mathrm{h}}^2}{\sigma_{\mathrm{F}}}$.

{\em Case 2} - We assume now that $\bo{U}$ is a compact convex set.
Since $F$ is strongly convex, the dual function $d$ is still
differentiable and  given by $\nabla d(\lambda)=h(\bo{u}(\lambda))$.
In order to show Lipschitz continuity of the gradient, we consider
the following family of dual functions $(d_\tau)_{\tau > 0}$:
\begin{equation}
\label{bar} d_{\tau}(\lambda)=\min_{\bo{u}\in \rset^n}
F(\bo{u})+\iprod{\lambda,h(\bo{u})}+\tau b_{\bo{U}}(\bo{u}),
\end{equation}
where $b_{\bo{U}}$ is a self-concordant barrier function for the set
$\bo{U}$. Let $u(\lambda,\tau)$ be the optimal solution of
\eqref{bar}. Using the same reasoning as in the unconstrained case
and taking into account that $\nabla^2 b_{\bo{U}}(\bo{u}) \succeq 0$
(see \cite[Section 4.2.2]{Nes:04}), we have that for any given $\tau
> 0$ the gradient $\nabla d_{\tau}(\lambda)=h(\bo{u}(\lambda, \tau))$
is Lipschitz continuous with constant
$L_{\mathrm{d_{\tau}}}=\frac{c_{\mathrm{h}}^2}{\sigma_{\mathrm{F}}}$,
i.e. $\| h(u(\lambda,\tau)) - h(u(\nu,\tau)) \| \leq
\frac{c_{\mathrm{h}}^2}{\sigma_{\mathrm{F}}} \| \lambda - \nu \|$
for all $\lambda, \nu \geq 0$. Since for all $\lambda \geq 0$ we
have $d_{\tau}(\lambda) \to d(\lambda)$, $u(\lambda,\tau) \to
u(\lambda)$  as $\tau \to +0$ and $h$ is a continuous function we
can conclude that the gradient of the dual function $d$  is also
Lipschitz continuous with constant
$L_{\text{d}}=\frac{c_\text{h}^2}{\sigma_{\text{F}}}$. \qed

\end{document}